\newtheorem{theorem}{Theorem}
\newtheorem{lemma}[theorem]{Lemma}
\newtheorem{corollary}[theorem]{Corollary}
\newtheorem{prop}[theorem]{Proposition}
\theoremstyle{definition}
\theoremstyle{remark}
\newtheorem{remark}[theorem]{Remark}
\newcommand{\norm}[1]{\left\Vert#1\right\Vert}
\newcommand{\abs}[1]{\left|#1\right|}
\newcommand{\ipr}[2]{\left\langle#1,#2\right\rangle}
\newcommand{\bk}{\mathbf{k}}
\newcommand{\bx}{\mathbf{x}}
\newcommand{\by}{\mathbf{y}}
\newcommand{\lin}{\ensuremath{\mathrm{lin}}}
\newcommand{\mix}{\ensuremath{\mathrm{mix}}}
\DeclareMathOperator{\Id}{Id}
\begin{document}

\title{Sampling recovery in $L_2$ and other norms}

\author{David Krieg}

\author{Kateryna Pozharska}

\author{Mario Ullrich}

 \author{Tino Ullrich}

\thanks{DK was supported by the Austrian Science Fund (FWF) M~3212-N. 
For the purpose of open access, the author has applied a CC BY public copyright licence to any Author Accepted Manuscript version arising from this submission. KP would like to acknowledge support by the Philipp Schwartz Fellowship of the Alexander von Humboldt Foundation and the budget program of the NAS of Ukraine ``Support for the development of priority areas of research'' (KPKVK 6541230). MU is supported by the Austrian Federal Ministry of Education, Science and Research via the Austrian Research Promotion Agency (FFG) through the project FO999921407 (HDcode) funded by the European Union via NextGenerationEU.
TU and KP are supported by the German Research Foundation (DFG) with grant Ul403/4-1\,.}

\subjclass[2000]{Primary 68Q25; Secondary  41A50, 46B09, 41A63, 47B06}

\date{}


\keywords{Information-based complexity, 
uniform approximation, 
Christoffel function,
optimal sampling}

\begin{abstract}
We study the recovery of functions in various 
norms, 
including $L_p$ with $1\le p\le\infty$, 
based on function evaluations. 
We obtain worst case error bounds for general classes of functions
in terms of the best $L_2$-approximation from a given nested sequence of subspaces 
and
the Christoffel function of these subspaces. 
In the case $p=\infty$, 
our results 
imply that linear sampling algorithms are optimal up to a constant factor
for many reproducing kernel Hilbert spaces.
\end{abstract}

\maketitle


\section{Introduction}

Let $F$ be a class of complex-valued functions on a domain $D$
and let $G$ be some semi-normed space that contains $F$.
The $n$-th linear sampling number of the class $F$ in $G$ is defined by
\begin{align*}
g_n^{\lin}(F,G) \,:=\, 
\inf_{\substack{x_1,\dots,x_n\in {D}\\ \varphi_1,\dots,\varphi_n\in G}}\, 
\sup_{f\in F}\, 
\Big\|f - \sum_{i=1}^n f(x_i)\, \varphi_i\Big\|_G.
\end{align*}
This is the \emph{minimal worst case error} that 
can be achieved with 
a linear sampling recovery algorithm
based on at most $n$ function values 
if the error is measured in~$G$. 

The sampling numbers are often difficult to analyze
and it is therefore of interest to compare them to other approximation quantities
which are either easier to handle or which are already well-studied.
This is the case for the so-called
\emph{linear $n$-width} or \emph{$n$-th approximation number}. 
Those numbers are defined as
\begin{equation}\label{appnu}
a_n(F, G) \,:=\,
\inf_{\substack{T\colon F \to G\\ {\rm rank}(T) \,\le\, n}}\, 
\sup_{f\in F}\, 
\big\|f - Tf \big\|_{G},
\end{equation}
i.e., they represent the error of best approximation by a {linear} operator of rank at most $n$.
Alternatively, they may be described as
the minimal worst case error 
that can be achieved with arbitrary linear algorithms
that use up to $n$ arbitrary linear measurements. 

Recently, there has been much progress in the study of sampling numbers in the case $G=L_2$.
For example,
if $F=B_H$ is the unit ball of a separable reproducing kernel Hilbert space $H$
that is compactly embedded into $L_2$,
a sharp bound on the sampling numbers was obtained in~\cite{DKU}. 
It was proven there that there is a universal constant $c\in\mathbb{N}$ such that 
\begin{equation*}
g_{cn}^\lin(B_H,L_2) \,\le\, \sqrt{ \frac{1}{n} \sum_{k\ge n} a_k(B_H,L_2)^2 }
\end{equation*}
for all $n\in\mathbb{N}$.
For a much more general family of classes $F$ 
(like compact subsets of the space of bounded functions),
the similar relation
\begin{equation}\label{eq:DKU-general}
g_{cn}^\lin(F,L_2) \,\le\, \bigg( \frac{1}{n} \sum_{k\ge n} a_k(F,L_2)^p \bigg)^{1/p}
\end{equation}
was shown for all $p<2$ with a constant $c\in\mathbb{N}$ determined by $p$.
In particular, these results show that 
\[
g_{n}^\lin(F,L_2) \;\asymp\; a_n(F,L_2)
\]
if $a_n(F,L_2)$ decays polynomially with rate greater than $1/2$, 
and hence, that (linear) algorithms based on function values
are asymptotically as powerful as arbitrary linear algorithms 
in this case. 
See also \cite{KU1,KU2,KWW,NSU} for forerunners of the above results,  
\cite{HNV,HKNV,KV} for corresponding lower bounds, 
and \cite{T20} for an upper bound in terms of Kolmogorov numbers in $L_\infty$, 
which is particularly useful if the numbers $(a_n)$ 
do not decay fast enough to apply the previous results, see \cite{TU1}. 

The main aim of the present paper is to depart from the assumption $G=L_2$ and provide bounds for the sampling numbers in general norms.
We summarize our main contributions.
\begin{itemize}[leftmargin=*]
    \item   \textbf{Section~\ref{sec:L2}:}  We start with a slight improvement of the upper bound~\eqref{eq:DKU-general} from~\cite{DKU} for the $L_2$-case. 
    We obtain the formula
    \[
g_{cm}^{\rm lin}(F,L_2) 
\;\le\; \frac{1}{\sqrt{m}}\sum_{k > m} \frac{ a_k(F,L_2)}{\sqrt{k}}
\]
 with a universal constant $c>0$,     see Corollary~\ref{coro:L2}
    and the discussion there.
    \medskip
    
    \item \textbf{Section~\ref{sec:general}:}  Most importantly, we give upper bounds on the sampling numbers $g_{n}^\lin(F,G)$ for error norms $G\ne L_2$, including 
    $G=L_p$. 
    As above, those upper bounds are in terms of the ``simple'' quantity $a_n(F,L_2)$.
    In order to deviate from the assumption $G=L_2$ we use a ``lifting'' approach similar to the one in \cite{PU} where the authors consider approximation in $G=L_\infty$.
    For example,
    whenever the optimal subspaces for $L_2$-approximation are ``nice enough" (we will define later what this means), we obtain the formula
 (see Corollary~\ref{coro:Linf})
 \[
    g_{cm}^{\rm lin}(F,L_\infty) 
    \;\le\; C \, \sum_{k > m} \frac{ a_k(F,L_2)}{\sqrt{k}}.
    \]
   
        \medskip

    \item \textbf{Section~\ref{sec:Hilbert}:} If $F=B_H$ is the unit ball of a reproducing kernel Hilbert space, 
    the bounds from Section~\ref{sec:general} can be improved.
    This is especially true if $G=L_\infty$ and the Hilbert space satisfies a certain additional assumption. Then we obtain a direct comparison of the numbers $g_{n}^\lin(B_H,L_\infty)$ and $a_n(B_H,L_\infty)$, namely,
    \[
     g_{cm}^{\rm lin}(B_H,L_\infty)
     \le C\, a_m(B_H,L_\infty),
    \]
    see Theorem~\ref{thm:H}.
    This implies that (linear) sampling algorithms are as powerful as arbitrary (linear) algorithms for uniform approximation
    on such Hilbert spaces.
          \medskip

    \item \textbf{Section~\ref{sec:ex}:} We apply our results to a variety of examples: 
    spaces of periodic functions (Section~\ref{subsec:per}),
    Sobolev spaces on manifolds (Section~\ref{subsec:mani}),
    and an example based on Legendre polynomials (Section~\ref{ex:unbounded}).
    The algorithms that our approach is based on are (weighted or unweighted) least-squares algorithm.
         {In particular,}
    we compare our results for the least-squares algorithm (LS) with available bounds for the Smolyak algorithm in the case that $F$ is the unit ball of a mixed order Sobolev 
    {$\mathbf{W}^r_p$}
    {or Nikol'skii $\mathbf{H}^r_p$ spaces.
    The outcome is described by Figure~\ref{fig1}; see Section~\ref{subsec:per} for details.}
\end{itemize}
          
\definecolor{both-opt}{rgb}{1,0,1}

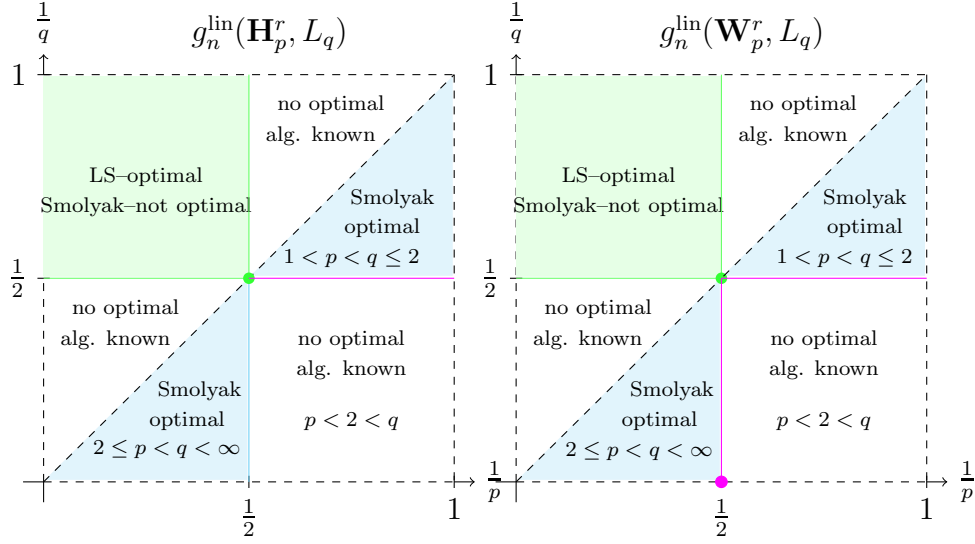
\begin{figure}[h]
	\begin{minipage}{0.45\textwidth}
		\begin{tikzpicture}[scale=2.4]
	
		\draw (-0.1,0.0) -- (0,0 );
		\draw[->] (2,0) -- (2.1,0.0) node[right] {$\frac{1}{p}$};
		\draw[->] (0,2) -- (0.0,2.1) node[above] {$\frac{1}{q}$};
		
		\draw[dashed](0,0) -- (0.0,2);
		\draw (0,-0.1) -- (0,0);
		\draw (1.0,0.03) -- (1.0,-0.03) node [below] {$\frac{1}{2}$};
		\draw (0.03,1) -- (-0.03,1) node [left] {$\frac{1}{2}$};

		\node at (1.1,2.2) {$g_n^{\lin}(\mathbf{H}^r_p, L_q)$};
		
		\draw[green!60] (0,2) -- (1,2);
  \draw[dashed] (1,2) -- (2,2);

				\filldraw[green!10] (0,1.01) -- (1,1.01) -- (1,1.99) -- (0,1.99);
    \fill[green!80] (1,2)  circle[radius=0.8pt];
	
	\node at (.5,1.5) {\tiny  LS--optimal};
	\node at (0.5,1.35) {\tiny Smolyak--not optimal};
	\draw[cyan!50] (1,0) -- (1,1);

		\draw[green!60] (1,1) -- (1,2);
		\draw[green!60] (0,1) -- (1,1);	
		\draw[dashed] (2,2) -- (2,0);

		\filldraw[cyan!10] (0.01,0.01) -- (0.99,0.99) -- (0.99,0.01);
			\filldraw[cyan!10] (1.01,1.015) -- (1.99,1.015) -- (1.99,1.99);
			
		\node at (.76,0.45) {\tiny  Smolyak};
			\node at (.7,0.3) {\tiny  optimal};
		
				\node at (.6,0.16) {\tiny  $2 \leq p<q<\infty$ };
				
	\node at (1.7,1.39) {\tiny  Smolyak};
\node at (1.5,1.1) {\tiny 	$1<p<q \leq 2$};
	\node at (1.65,1.25) {\tiny 	optimal};

		\draw[dashed] (0,0) -- (2,2);
		\draw[dashed] (0,0) -- (2,0);

\fill[green!80] (1,1)  circle[radius=0.8pt];
	
		\draw (2,0.03) -- (2,-0.03) node [below] {$1$};
		\draw (0.03,2) -- (-0.03,2) node [left] {$1$};
		
		\node at (1.5,0.7) {\tiny  no optimal};
		\node at (1.5,0.55) {\tiny alg. known };

		\node at (1.5,0.3) {\tiny $p<2<q$};

			\node at (1.4,1.85) {\tiny  no optimal};
		\node at (1.35,1.7) {\tiny alg. known };

			\node at (0.4,0.85) {\tiny  no optimal};
		\node at (0.35,0.7) {\tiny alg. known };

  \draw[both-opt!100] (1,1) -- (2,1);

		\end{tikzpicture}
	\end{minipage}
    \hskip3mm
		\begin{minipage}{0.45\textwidth}
		\begin{tikzpicture}[scale=2.4]
		\draw (-0.1,0.0) -- (0,0 );
		\draw[->] (2,0) -- (2.1,0.0) node[right] {$\frac{1}{p}$};
		\draw[->] (0,2) -- (0.0,2.1) node[above] {$\frac{1}{q}$};
		
		\draw[dashed](0,0) -- (0.0,2);
		\draw (0,-0.1) -- (0,0);
		\draw (1.0,0.03) -- (1.0,-0.03) node [below] {$\frac{1}{2}$};
		\draw (0.03,1) -- (-0.03,1) node [left] {$\frac{1}{2}$};

		\node at (1.1,2.2) {$g_n^{\lin}(\mathbf{W}^r_p, L_q)$};
		
	
	\draw[green!60] (0,2) -- (1,2);
  \draw[dashed] (1,2) -- (2,2);

		\filldraw[green!10] (0,1.01) -- (1,1.01) -- (1,1.99) -- (0,1.99);
	
			\node at (.5,1.5) {\tiny  LS--optimal};
			\node at (0.5,1.35) {\tiny Smolyak--not optimal};
	
			\draw[green!60] (1,1) -- (1,2);
		\draw[green!60] (0,1) -- (1,1);	
 
		\draw[dashed] (2,2) -- (2,0);
	
		\filldraw[cyan!10] (0.01,0.01) -- (0.989,0.99) -- (0.989,0.01);
			\filldraw[cyan!10] (1.01,1.015) -- (1.989,1.015) -- (1.989,1.99);
			\fill[green!80] (1,2)  circle[radius=0.8pt];
			
			\node at (.76,0.45) {\tiny  Smolyak};
					\node at (.6,0.15) {\tiny  $2 \leq p<q<\infty$ };
			\node at (.7,0.3) {\tiny  optimal};

			\node at (1.7,1.39) {\tiny  Smolyak};
\node at (1.6,1.1) {\tiny 	$1<p<q \leq 2$};
	\node at (1.7,1.25) {\tiny 	optimal};

\fill[green!80] (1,1)  circle[radius=0.8pt];
		
		\draw[dashed] (0,0) -- (0.99,0.99);
		\draw[dashed] (1.01,1.01) -- (2,2);
		\draw[dashed] (0,0) -- (2,0);

		\draw (2,0.03) -- (2,-0.03) node [below] {$1$};
		\draw (0.03,2) -- (-0.03,2) node [left] {$1$};

		\node at (1.5,0.7) {\tiny  no optimal};
		\node at (1.5,0.55) {\tiny alg. known };

		\node at (1.5,0.3) {\tiny $p<2<q$};

			\node at (1.4,1.85) {\tiny  no optimal};
		\node at (1.35,1.7) {\tiny alg. known};

			\node at (0.4,0.85) {\tiny  no optimal };
		\node at (0.35,0.7) {\tiny alg. known};

    \draw[both-opt!100] (1,0) -- (1,1);
    \draw[both-opt!100] (1,1) -- (2,1);	
    \filldraw[draw=
   both-opt!100, fill=both-opt!100] (1,0)  circle[radius=0.8pt];

		\end{tikzpicture}
	\end{minipage}
	\caption{ Parameter region with sharp orders. \\
\footnotesize (Green area: Least squares is optimal, Smolyak  is not optimal. \\ 
Blue area: Smolyak is optimal, {known LS bounds are not optimal. }\\
Magenta lines: Both algorithms are optimal.)}
	\label{fig1}
\end{figure}

\subsection{Notation} \
For a measure $\mu$ on a set $D$ {and} $1\le p<\infty$, 
we denote by $L_p{:=}L_p(\mu)$  
the space of complex-valued functions that are 
$p$-integrable w.r.t.\ $\mu$, 
and by $L_\infty$ the space of bounded functions on $D$.
The norms in the respective spaces are given by 
$\Vert f \Vert_p \ := \left(\int_{D} |f|^p\,{\rm d}\mu\right)^{1/p}$ 
and $\Vert f \Vert_\infty \,:=\, 
\sup_{x\in D}\; |f(x)|$. 
{That is, $L_\infty$ does \emph{not} denote the space of essentially bounded functions, but instead the space of bounded functions.}
Moreover, 
if $D$ is a {compact} 
topological space, then $C(D)$ is the space of all continuous functions on $D$ {equipped with the max-norm}.  \\
If $H$ is a Hilbert space we denote with $\ipr{f}{g}_H$ the inner product of $f,g \in H$. By $\log$ we denote the logarithm with base 2 and $a_+ := \max \{a, 0 \}$.
For two sequences
$(a_n)_{n=1}^{\infty},(b_n)_{n=1}^{\infty}\subset \mathbb{R}$ we write 
$a_n \lesssim b_n$ if there exists a constant $c>0$,
such that $a_n \leq cb_n$ for all {but finitely many}~$n$. 
We write $a_n \asymp b_n$ if $a_n \lesssim b_n$ and $b_n \lesssim a_n$.
If the constant $c$ depends, 
 {e.g.}, only on the two parameters $\alpha$ and $\beta$, 
then we indicate it by $\lesssim_{\alpha, \beta}$ and $\asymp_{\alpha, \beta}$.
Without indication it may depend on all involved parameters, except for $n$.

\section{Approximation in $L_2$}\label{sec:L2}

The aim of the present paper is to identify properties of function classes $F$ that allow for the existence of good linear sampling recovery 
algorithms for 
approximation in a given seminorm. 

We work under the following assumption, which is an extension of Assumption~A from~\cite{DKU}.

\hypertarget{assum}{}

\noindent\textbf{Assumption B.} \ Assume that:
\vspace*{-3pt}
\begin{itemize}
    \item[(B.1)] $F$ is a separable metric space of complex-valued functions on a set $D$ such that function evaluation $f\mapsto f(x)$ is, for each $x\in D$, continuous on $F$.
    \item[(B.2)] $\mu$ is a measure on $D$ and $F$ is continuously embedded into $L_2(\mu)$.
   \item[(B.3)] 
    $G$ is a seminormed space  {of complex-valued functions on~$D$} which contains $F$, and $G\cap L_2$ is complete w.r.t.~the natural seminorm $\|\cdot\|_*:=\|\cdot\|_G+\|\cdot\|_2$. 
   If two functions from $G$ are equal $\mu$-almost everywhere, then their seminorm in $G$ is the same.
   \item[(B.4)] $(V_n)_{n=1}^\infty$ is a 
   nested sequence of subspaces of $G \cap L_2$ of 
   dimension
    {$\dim V_n = n$ for $n\in \mathbb{N}$.}
    \end{itemize}

All the assumptions are satisfied, for instance, if
\begin{itemize}
    \item $D$ is a compact topological space, 
    \item $\mu$ is a {finite} Borel
    measure with support $D$, 
        \item $F$ is a compact subset of the space $C(D)$ of continuous functions (equipped with the metric induced by the maximum norm), 
    \item and $G$ is either $C(D)$ or the semi-normed space $L_p(\mu)$ of $p$-integrable functions on $D$, where $1\le p \le \infty$.
\end{itemize}
 But in principle, also infinite measures $\mu$ and other seminorms like 
 Sobolev norms 
 are allowed, and it is not needed that $F\subset C(D)$ to obtain results for $L_p$ or even uniform approximation. See Section~\ref{ex:unbounded} for an example that goes beyond the aforementioned special case.

Although the main goal of this paper is to study the sampling numbers in norms different from $L_2$, our first contribution is a slight improvement of the best known bound in the case $G=L_2$.
The main assumption here is that there exists a sequence of  {nested} subspaces 
$V_n$ 
that are ``good'' for $L_2$-approximation. We compare the linear sampling numbers with the error of best $L_2$-approximation within the spaces $V_n$, i.e., with

\begin{equation}\label{eq:an}
\alpha_n \,:=\, \alpha(V_n,F, L_2
) \,:=\, \sup_{f \in F} \Vert f - P_{V_n} f  \Vert_2
\;=\; \sup_{f \in F}\, \inf_{g\in V_n}\Vert f - g  \Vert_2, 
\end{equation}
where $P_{V_n}$ is the orthogonal projection onto $V_n$ in $L_2(\mu)$.

Based on the ideas from~\cite{DKU}, we obtain that there are optimal sampling algorithms 
whenever the numbers $\alpha_n$ decay fast enough. 

\begin{prop}\label{prop:L2}
There is an absolute constant $c>0$ such that the following holds.
Let $D$, $\mu$ and $F$ satisfy \hyperlink{assum}{Assumption~B} with $G=L_2$. 
Moreover, let $(\alpha_k)_{k=1}^\infty$ be 
as in \eqref{eq:an}.
Then, for every $m\in\mathbb{N}$, there 
are $x_1,\dots,x_n\in D$ and $\varphi_1,\dots,\varphi_n\in V_m$, where $n\le cm$,
such that the algorithm $A_m
{f}
=\sum_{i=1}^n f(x_i) \varphi_i$ satisfies
\begin{equation*}
\sup_{f\in F}\, \Vert f - A_m f \Vert_2 
\;\le\; \frac{70}{\sqrt{m}}\sum_{k > \lfloor m/4 \rfloor} \frac{ \alpha_k}{\sqrt{k}}.
\end{equation*}
\end{prop}

\begin{remark}[The algorithm] \label{rem:alg}
{The algorithm in Proposition~\ref{prop:L2}
is in fact a \emph{weighted least squares method}. It
is the same algorithm as used
in~\cite{DKU} for $L_2$-ap\-proxi\-mation.
Unfortunately,
the result from \cite{DKU} is not constructive.
However, as already observed in~\cite{KU1} we obtain the same error bound with high probability if, instead of the $cm$ unknown sampling points, we use a weighted least squares method based on $c m \log m$ i.i.d.\ random sampling points with respect to a specific density (given there), 
see also~\cite{KU2,MU,U2020}, 
or~\cite{SU23} for a survey.
The density is determined by the spaces $(V_m)$ and the numbers $(\alpha_k)$.
The random construction can be implemented whenever the density is ``nice enough".
This is the case, for instance, if the spaces $V_m$ are of the form $V_m = \operatorname{span}\{b_1,\hdots,b_m\}$ with a uniformly bounded orthonormal system $(b_j)_{j\in\mathbb{N}}$ in $L_2$,
or also in the situation of Theorem~\ref{thm:H} (for Hilbert spaces) below.
Then the points can be taken uniformly distributed on the domain and the algorithm is a \emph{plain} (non-weighted) \emph{least squares method}.}
\\
{In order to reduce the $cm\log m$ points to $cm$ points,}
a first subsampling approach 
was considered in~\cite{NSU}. 
This uses a (finite-dimensional) \emph{frame subsampling} 
that is based on 
the non-constructive solution to the Kadison-Singer problem. 
In~\cite{DKU}, this was extended to an infinite-dimensional version, 
leading to an optimal bound. 
It is open if this can be done constructively, 
see e.g.~\cite{BSU} for first results. 
\end{remark}

The main idea 
of the proof is to construct an appropriate ``intermediate'' 
\emph{reproducing kernel Hilbert space} {(RKHS)} $H$ and then apply the bounds available for Hilbert spaces, 
i.e., Theorem~23 from~\cite{DKU}.

\begin{proof}
{We follow the lines of \cite[Proof of Theorem~3]{DKU} and only choose the singular values $\sigma_k$ differently. We copy the proof for convenience. 

Clearly, we may assume that $\alpha_k= \alpha(V_k,F, L_2)$ {defined by \eqref{eq:an}} 
is finite for $k\ge k_0 := \lfloor m/4\rfloor$ and that $(\alpha_k/\sqrt k)_{k\ge k_0}$ is summable. Otherwise, the statement is trivial.
This summability (together with the monotonicity of $(\alpha_k)_k$) also implies that $(\alpha_k)_{k\ge k_0}$ is square summable.

First, we choose functions $(b_k)_{k\in \mathbb{N}_0}$ such that they are orthonormal in $L_2$ and such that $V_m = \operatorname{span}\{b_k\colon k<m\}$.
We let $P_m$ be the $L_2$-orthogonal projection onto $V_m$.
We call $$\hat f(k) := \langle f, b_k\rangle_{L_2}$$
the $k$th Fourier coefficient of $f$.
Since $\alpha_k \to 0$ for $k\to \infty$,
we have $P_k f \to f$ in $L_2$ and so
\[
 f = \sum_{k\in\mathbb{N}_0} \hat f(k) b_k
\]
for all $f\in F$ with convergence in $L_2$,
as well as
\[
 \Vert f - P_k f \Vert_2^2 \,=\, \sum_{r\ge k} |\hat f(r)|^2 \,\le\, \alpha_k^2
\]
for all $r\in \mathbb{N}$,
and hence also
\begin{equation}\label{eq:RM}
 \sum_{k\ge 1} k\, |\hat f(k)|^2 
 \,=\, \sum_{n\ge 0} \sum_{k>n} |\hat f(k)|^2
 \, < \infty.
\end{equation}
Define $(\sigma_k)_{k\in\mathbb{N}_0}$ to be a non-increasing sequence with
\begin{equation*}
\sigma_k^2 \,:=\, 
\frac{\alpha_{\lceil k/2\rceil}}{\sqrt{k}}
\qquad \text{if } \lceil k/2\rceil \ge k_0.
\end{equation*}
Then we have $(\sigma_k)\in\ell_2$.

We fix a countable dense subset $F_0$ of $F$.
We want to define a 
RKHS on a set $D_0 \subset D$,
with $\mu(D\setminus D_0)=0$, which has the orthonormal basis $(\sigma_k b_k){_{k\in \mathbb{N}_0}}$
and contains the set $F_0$.
Such a Hilbert space will have the reproducing kernel 
\[
 K(x,y) \,=\, \sum_{k\in\mathbb{N}_0} \sigma_k^2 b_k(x) \overline{b_k(y)}.
\]
To find a suitable set $D_0$, we first note that 
\begin{equation}\label{eq:traceofH}
 \int_D K(x,x)\,d\mu(x) \,=\, \sum_{k\in\mathbb{N}_0} \sigma_k^2 < \infty
\end{equation}
and thus $K(x,x)$ is finite for all $x\in D\setminus E$
with a null set $E\subset D$.
Moreover, for all $f\in F_0$, via the relation \eqref{eq:RM},
the Rademacher-Menchov Theorem implies that 
the Fourier series of $f$ at $x$ converges to $f(x)$ 
for all $x\in D\setminus E_f$ with a null set $E_f\subset D$.
We put 
$D_0 := D \setminus E_0$, where
$
 E_0 := E \cup \bigcup_{f\in F_0} E_f
$
is a null set.
Then
for all $x\in D_0$ and $f\in F_0$, we have
\[
 K(x,x) \,< \infty
\quad \text{and} \quad
 f(x) \,=\, \sum_{k\in \mathbb{N}_0} \hat f(k)\, b_k (x).
\]

We now define the space $H$ as the set of all square-integrable functions $f\colon D_0 \to \mathbb{C}$
which are pointwise represented by their Fourier series
$\sum_{{k\in \mathbb{N}_0}} \hat f (k) \, b_k$
and which satisfy
\[
 \Vert f \Vert_H^2 \,:=\, \sum_{k\in\mathbb{N}_0} \frac{|\hat f(k)|^2}{\sigma_k^2} < \infty.
\]
Then $H$ is a separable reproducing kernel Hilbert space on $D_0$
since
\[
|f(x)|^2\leq K(x,x)\|f\|_H^2\quad\text{ for all }x\in D_0\text{ and }f\in H,
\]
and $(\sigma_k b_k)_{k\in\mathbb{N}_0}$ is an orthonormal basis of $H$.
The reproducing kernel is $K$, which
has finite trace by \eqref{eq:traceofH}.

We now show that $F_0$ (with functions restricted to $D_0$) is a subset of $H$.
Recall that any $f\in F_0$ is pointwise represented by its Fourier series. Moreover,
note that $\alpha(V_k,F_0, L_2)=\alpha(V_k,F, L_2)=\alpha_k$  for all $k$.
So}
\[ 
\begin{split}     
	\Vert f - P_m f &\Vert^2_H 
	\,=\, \sum_{k > m} \sigma_k^{-2} \abs{\ipr{f}{b_k}_2}^2 
	\,\le\, \sum_{ \ell  =0}^\infty \frac{\sqrt{2^{\ell+1}m}}{\alpha_{2^{\ell}m}} 
	\sum_{k= 2^{\ell}m +1}^{2^{\ell+1}m}  \abs{\ipr{f}{b_k}_2}^2\\
	&\le\, \sum_{ \ell  =0}^\infty \sqrt{2^{\ell+1}m} \cdot \alpha_{2^{\ell}m}
	\,\le\, \sum_{ \ell  =0}^\infty \sqrt{2^{\ell+1}m} \cdot \frac{1}{2^{\ell-1}m} \sum_{k=\lfloor 2^{\ell-1}m\rfloor+1}^{2^\ell m} \alpha_k\\
	&\le\, 2\sqrt2 \sum_{ \ell  =0}^\infty \sum_{k=\lfloor 2^{\ell-1}m\rfloor+1}^{2^\ell m} \frac{ \alpha_k}{\sqrt{k}}
	\,=\, 2\sqrt2 \sum_{k> \lfloor m/2\rfloor} \frac{ \alpha_k}{\sqrt{k}} \,< \infty.
	\end{split}
\]
{This implies also that $f\in H$ for all $f\in F_0$.

We now apply \cite[Theorem~23]{DKU}
to the newly constructed Hilbert space $H$ to
find $n\le 43200\,m$ and a linear algorithm $A_m$ of the form
\[
 A_m(f) \,=\, \sum_{i=1}^n f(x_i) \varphi_i, \quad x_i \in D_0, \ \varphi_i \in V_m,
\]
such that
\begin{equation*}
 \Vert f - A_m f \Vert_{L_2(D_0,\mu)}^2 
 \,\le\, 433 \,
\max\bigg\{\sigma_m^2, \, \frac{1}{m}\sum_{k\geq m}\sigma_k^2 \, \bigg\}
\, \Vert f - P_m f \Vert_{H}^2
\end{equation*}
for all $f\in H$ and thus, for all $f\in F_0$.
Clearly, in the last inequality, $D_0$ can be replaced with $D$.
If we now insert the upper bound for $\Vert f - P_m f\Vert_H^2$ and the estimate}
$$
\sigma_m^2 \leq \frac{1}{m/2} \sum_{k =  \lfloor m/2\rfloor+1 }^m \sigma_k^2 \leq \frac{2}{m} \sum_{k >  \lfloor m/2\rfloor  } \sigma_k^2
\,\leq \frac{4}{m} \sum_{k > \lfloor m/4 \rfloor} \frac{\alpha_k}{\sqrt k},
$$
we get
\[
 \Vert f - A_m f \Vert_{L_2}^2 
 \,\le\, \frac{\sqrt{4\cdot 2\sqrt2 \cdot 433}}{\sqrt{m}}\sum_{k > \lfloor m/4 \rfloor} \frac{ \alpha_k}{\sqrt{k}}
\]
for {all $f\in F_0$.
Since $F_0$ is dense in $F$ and
both ${\rm id} \colon F \to L_2$ and $A_m \colon F \to L_2$
are continuous,
this bound holds for all $f\in F$. 
This concludes the proof.
}
\end{proof}

If we choose the sequence of nested subspaces $V_m$ accordingly,
we immediately get an upper bound on the linear sampling numbers in terms of the approximation numbers.

\begin{corollary}\label{coro:L2}
There is an absolute constant $c\in \mathbb{N}$ such that,
for all $D$, $\mu$ and $F$ that satisfy 
Assumptions \hyperlink{assum}{(B.1)} and \hyperlink{assum}{(B.2)},
and for all $m\in\mathbb{N}$, we have 
\[
g_{cm}^{\rm lin}(F,L_2) 
\;\le\; \frac{1}{\sqrt{m}}\sum_{k > m} \frac{ a_k(F,L_2)}{\sqrt{k}}.
\]
\end{corollary}

\begin{proof}
By \cite[Lemma~3]{KU2}, there is a nested sequence of subspaces  {$V_k$} of $L_2$ such that {for $\alpha_k$ from \eqref{eq:an} it holds}
$
{\alpha_k}
 \le 2 a_{\lfloor k/4 \rfloor}(F,L_2)
$.
{We note that 
\hyperlink{assum}
{Assumption~B} is satisfied for $G=L_2$ and this choice of spaces $V_n$. So we can apply}
Proposition~\ref{prop:L2} to get
\[
g_{cm}^{\rm lin}(F,L_2) 
\;\le\; \frac{560}{\sqrt{m}}\sum_{k > \lfloor m/16 \rfloor} \frac{ a_k(F,L_2)}{\sqrt{k}}.
\]
with $c$ as in Proposition~\ref{prop:L2}.
Finally, we replace $c$ with $\lceil 560^2c \rceil$.
\end{proof}

\sloppy
We note that the assumptions 
\hyperlink{assum}
{(B.1)} and 
\hyperlink{assum}
{(B.2)} together equal the Assumption~A from~\cite{DKU}.
Corollary~\ref{coro:L2} is a slight improvement of \cite[Thm.~3]{DKU} and \cite[Thm.~27]{DKU}. 
{This is most obvious if the sequence $(k^{-1/2} a_k)_{k\in\mathbb{N}}$ is ``just barely summable". 
For example, if $a_k{(F,L_2)}= k^{-1/2} (\log k)^{-1} (\log\log k)^{-t}$ for some $t>1$,
then the previous upper bounds are infinite, whereas the new upper bound is small}.


\bigskip

\section{Approximation in general norms}\label{sec:general}

For approximation in general norms, 
the idea is to lift the previous results from $L_2$-approximation to $G$-approximation, using 
the quantity 

\begin{equation}\label{eq:Lambda}
\Lambda_n \;:=\;
\Lambda(V_n,G) \;:=\; 
 \sup\limits_{f \in V_n, \, f\neq 0} \frac{\|f\|_G}{\|f\|_2}.  
\end{equation}
Depending on the choice of $V_n$, {$G$ and $L_2(\mu)$,}
the $\Lambda_n$
correspond to the inverse of the Christoffel function, or the best constant in Bernstein or Nikol'skii inequalities.
Its appearance is no surprise as 
it is 
an important quantity in approximation theory, 
especially for $G=L_\infty$ and $V_n$ a space of polynomials, see~\cite{Freud,Gr19,Nevai}.  
It has already 
turned out to be instrumental in 
proving results in the uniform norm, see e.g.~\cite{KWW,PU}.

\smallskip
\goodbreak

The general procedure of lifting results from $L_2$-approximation to $G$-approximation will be explained 
{below}.
The outcome is the following result for linear sampling recovery 
in general norms.

\smallskip

\begin{theorem}\label{thm:general}
Let 
\hyperlink{assum}
{Assumption~B} hold and
let $(\alpha_k)_{k=1}^\infty$ and $(\Lambda_k)_{k=1}^\infty$ be defined as in \eqref{eq:an} and \eqref{eq:Lambda}. 
Moreover, let $c$ and $A_m$ 
from Proposition~\ref{prop:L2}
be the universal constant 
and the linear algorithm
$A_m \colon  F \to V_m$, that uses  $n\leq c m $ function evaluations, 
respectively. \\
Then, for all $m\in\mathbb{N}$,
we have 
$$
\sup_{f\in F}\, \Vert f - A_m f  \Vert_{G} \,\le
 {2} \sum_{k > \lfloor m/4 \rfloor} \frac{\alpha_k \Lambda_{4k}}{k} 
	+ \frac{70\Lambda_m}{\sqrt{m}}\sum_{k > \lfloor m/4 \rfloor} \frac{ \alpha_k}{\sqrt{k}}.
$$
\end{theorem}

\smallskip
\goodbreak

Taking into account that 
$
\sum_{k>m} k^{-a} (\log k)^b
 \,\lesssim_{a}\, m^{-a+1} (\log m)^b
 $
 whenever $a>0$ and $b\in\mathbb{R}$, Theorem~\ref{thm:general} yields
the following corollary on the order of convergence of the sampling numbers.

\begin{corollary}\label{coro:general}   
Let 
\hyperlink{assum}
{Assumption~B} hold and let 
$a,b,\gamma$ and $\delta$ be real parameters with $a > \max\{b, 1/2\}$.
If
\[
\Lambda(V_n,G)  
\;\lesssim\; n^b (\log n)^{\delta}
\quad \text{ and } \quad 
\alpha(V_n,F, L_2)
\,\lesssim\, n^{-a} (\log n)^\gamma
\]
then
\[
g_n^{\lin} (F, G) \;\lesssim\; n^{-a+b} (\log n)^{\gamma+\delta}.
\]
\end{corollary}

\medskip

In Section~\ref{sec:ex} we discuss examples that show that 
this bound is often sharp up to constants.

For finite measures and uniform approximation, the optimal behavior of the Christoffel function is that $\Lambda(V_m,L_\infty) \asymp \sqrt{m}$, see Remark~\ref{rem:minimal-Chris}.
In this case, 
our upper bound simplifies as follows.

\begin{corollary}\label{coro:Linf}
 There is a universal constant $c\in\mathbb{N}$ such that,
 whenever the assumptions of Theorem~\ref{thm:general} hold together with $\Lambda(V_m,L_\infty) \le C m^{1/2}$, then
\[
 g_{cm}^\lin( F , L_\infty) \,\le\, 74 C \sum_{k > m} \frac{ \alpha_k}{\sqrt{k}}.
\]
\end{corollary}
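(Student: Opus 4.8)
The plan is to read this off directly from Theorem~\ref{thm:general}; the corollary is essentially a substitution of the growth bound $\Lambda_m \le C m^{1/2}$ together with a harmless reindexing. The only cosmetic mismatch is that Theorem~\ref{thm:general} produces sums over $k > \lfloor m/4 \rfloor$, whereas the corollary asks for sums over $k > m$. I would fix this by applying Theorem~\ref{thm:general} not at $m$ but at $4m$ in its place. Then $\lfloor 4m/4 \rfloor = m$, so both sums appearing in the theorem range exactly over $k > m$, and the resulting bound is on the recovery error of the algorithm $A_{4m} \colon F \to V_{4m}$.

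Next I would plug the hypothesis $\Lambda_j \le C j^{1/2}$ into the two terms of the theorem's bound (now read at $4m$). For the first term I use $\Lambda_{4k} \le C(4k)^{1/2} = 2C\sqrt{k}$, so that $2\alpha_k \Lambda_{4k}/k \le 4C\,\alpha_k/\sqrt{k}$, and the whole first term is at most $4C \sum_{k>m} \alpha_k/\sqrt{k}$. For the second term I use $\Lambda_{4m} \le C(4m)^{1/2} = 2C\sqrt{m}$, so the prefactor collapses as $70\,\Lambda_{4m}/\sqrt{4m} \le 70\cdot 2C\sqrt{m}/(2\sqrt{m}) = 70C$, giving at most $70C \sum_{k>m} \alpha_k/\sqrt{k}$. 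Adding the two contributions yields the advertised constant $4C + 70C = 74C$ in front of $\sum_{k>m} \alpha_k/\sqrt{k}$.

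Finally I would track the sampling budget and relabel the constant. By Theorem~\ref{thm:general}, the algorithm $A_{4m}$ uses at most $c_0\cdot(4m) = 4 c_0 m$ function evaluations, where $c_0 \in \N$ is the universal constant from Proposition~\ref{prop:L2-bound}. Setting the corollary's universal constant to $c := 4 c_0 \in \N$ therefore gives $g_{cm}^\lin(F,L_\infty) \le 74 C \sum_{k>m} \alpha_k/\sqrt{k}$, as claimed. I do not expect any genuine obstacle: the argument is a one-line substitution plus the rescaling $m \mapsto 4m$ that aligns the summation index, so the only point requiring care is the bookkeeping of the factors $2$ and $4$ and the cancellation $\sqrt{m}/\sqrt{4m} = 1/2$ in the second term.
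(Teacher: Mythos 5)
Your proposal is correct and is precisely the paper's own proof: the paper's one-line argument reads ``We use Theorem~\ref{thm:general} with $4m$ instead of $m$,'' and your substitution $\Lambda_{4k}\le 2C\sqrt{k}$, the cancellation $\Lambda_{4m}/\sqrt{4m}\le C$, and the bookkeeping $4C+70C=74C$ with sampling budget $4c_0 m$ are exactly the details that this one line leaves implicit.
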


\begin{proof}
 We use Theorem~\ref{thm:general} with $4m$ instead of $m$.
\end{proof}

\begin{remark}[{The case $G=L_\infty$}]
\label{rem:minimal-Chris} 
{In this paper, $L_\infty$ denotes the space of bounded functions equipped with the strong sup-norm. Our general
\hyperlink{assum}
{Assumption~B} would also admit to choose $G$ as the space of essentially bounded functions with the weak sup-norm, but we are more interested in pointwise estimates. We choose the (somewhat unusual) notation $L_\infty$ for the space of bounded functions for convenience, in order to have a unified notation for all $1\le p \le \infty$.
If $V_n$ is an $n$-dimensional subspace of $L_\infty$, the numbers $\Lambda(V_n,L_\infty)$ can be expressed in terms of an arbitrary $L_2$-}orthonormal basis 
$\{ b_k\}_{k=1}^{n}$ 
of $V_n$. 
We have
\begin{equation}\label{gamma2}
\Lambda(V_n,L_\infty) \,=\, 
\Big\Vert\sum_{k=1}^{n} \left| b_k \right|^2\Big\Vert_{\infty}^{1/2}.
\end{equation}
If the measure $\mu$ is finite, then
\begin{equation*}
    \Lambda(V_n,L_\infty) \,\ge\,  \bigg( \frac{1}{\mu(D)} \int_D \sum_{k=1}^{n} \left| b_k \right|^2 {\rm d}\mu \bigg)^{1/2}
    \,=\, \sqrt{\frac{n}{\mu(D)}}.
\end{equation*}
That is, the numbers $\Lambda(V_n,L_\infty)$ grow at least like $\sqrt{n}$.
This optimal behavior of $\Lambda_n$ appears
for spaces spanned by
trigonometric monomials, Chebyshev polynomials and spherical harmonics if $\mu$ is their corresponding orthogonality measure. In addition, it is met for certain Walsh and 
 (Haar) wavelet spaces (however, not in general).
In other cases, {$\Lambda(V_n,L_\infty)$} might be larger. 
We will discuss some examples in Section~\ref{sec:ex}.
\end{remark}

\begin{remark}[{The case $G=L_p$}]
{For} $2\le p < \infty$,
{and in the setting of Remark~\ref{rem:minimal-Chris},} an easy computation shows
\[
\Lambda(V_n,L_p) \,\le\, \Lambda(V_n,L_\infty)^{1-2/p}
 \,=\,
 \Big\Vert\sum_{k=1}^{n} \left| b_k \right|^2\Big\Vert_{\infty}^{1/2-1/p}.
\]
\end{remark}

There are 
numerous results in the literature 
on uniform and $L_p$-approximation, often for specific classes $F$ 
and explicit algorithms, see Section~\ref{sec:ex}. 
The advantage of Theorem~\ref{thm:general} 
is its generality and simplicity. 
{Since the used algorithm (see Remark \ref{rem:alg} for details) is specified 
by the knowledge of $F$ and $(V_n)_{n=1}^\infty$ only, 
Theorem~\ref{thm:general} shows the existence of 
an algorithm that is \emph{universal} in $p$.}

\bigskip

{To explain the lifting that is needed for the proof of Theorem~\ref{thm:general}, }
let $F$, $G$ and $(V_n)$ that satisfy 
\hyperlink{assum}
{Assumption~B} be fixed, and  
recall 
the shorthand notations 
\begin{equation*}
\Lambda_n \;
{=}
\;\Lambda(V_n,G) \;
{=}
\; \sup\limits_{f \in V_n, \, f\neq 0} \frac{\|f\|_G}{\|f\|_2} 
\end{equation*}
and 
\begin{equation*}
\alpha_n \,
{=}
\, \alpha(V_n,F, L_2) \,
{=}
\, \sup_{f \in F} \Vert f - P_{V_n} f  \Vert_2
\;=\; \sup_{f \in F}\, \inf_{g\in V_n}\Vert f - g  \Vert_2, 
\end{equation*}
where $P_{V_n}$ is the orthogonal projection onto $V_n$ in $L_2(\mu)$. 
We also choose an orthonormal system $\{b_k\}_{k=1}^\infty$
such that $b_1,\dots,b_n$
form a basis of $V_n$ for all $n\in\mathbb{N}$.

{
We now show how to lift $L_2$-error bounds
to error bounds in $G$ using the spectral function $\Lambda_n$.
}

\begin{lemma}\label{lem:lift}
 Let \hyperlink{assum}{Assumption~B} hold and let 
 $(\alpha_k)_{k=1}^\infty$ and $(\Lambda_k)_{k=1}^\infty$ be defined as in~\eqref{eq:an} and \eqref{eq:Lambda}, respectively.
 For any $m\in\mathbb{N}$, any mapping $A\colon F \to V_m$ and all $f\in F$,  we have
 \begin{equation*}
  \Vert f - Af\Vert_{G}\,\le\, 
  2 \sum_{k > \lfloor m/4 \rfloor 
    } \frac{ \alpha_k \, \Lambda_{4k}}{k} 
  \,+\, \Lambda_m \cdot \Vert f-Af\Vert_2.
 \end{equation*}
 \end{lemma}

 \begin{proof}
Note that there is nothing to prove if the right hand side of the inequality 
is infinite
 so we may assume that it is finite.
We first observe
\begin{align*}
 \Vert f - A f  \Vert_{G}
&\le\, \Vert f - P_{V_m} f \Vert_{G} + \Vert P_{V_m} f - A f \Vert_{G}\\
&\le\, \Vert f - P_{V_m} f \Vert_{G} + \Lambda_m \cdot \Vert P_{V_m}f - A f \Vert_{2}\\
&\le\, \Vert f - P_{V_m} f \Vert_{G} + \Lambda_m \cdot \Vert f - A f \Vert_{2}.
\end{align*}

In order to estimate $\Vert f - P_{V_m} f \Vert_G$,
we first 
show that $P_{V_m} f$ converges in $G\cap L_2$.
We 
 {define}
$I_\ell = \{ k \in \mathbb{N} \colon 2^{\ell}<  k \leq 2^{\ell+1}\}$ for $\ell\in\mathbb{N}_0$. Let $m\in I_s$ and $n\in I_t$ with $s\le t$. Then

\begin{align*}
\Vert P_{V_n} &f - P_{V_m} f \Vert_G  \,\le\,
\sum_{\ell=s}^t
\bigg\Vert\sum\limits_{ k\in I_\ell\colon m<k\le n} \ipr{f}{b_k}_2 b_k \bigg\Vert_G
\\ 
&\le\,
\sum_{\ell=s}^t
\bigg\Vert\sum\limits_{ k\in I_\ell} \ipr{f}{b_k}_2 b_k \bigg\Vert_2
  \cdot \Lambda_{2^{\ell+1}}
 \,\leq\, \sum_{\ell \geq s}  \alpha_{2^\ell} \Lambda_{2^{\ell+1}}.
\end{align*}
 Further we use the fact that the sequence $(\alpha_k)_{k=1}^\infty$
 is non-increasing, and therefore
\begin{equation*}
\alpha_{2^\ell} \leq   \frac{1}{2^{\ell-1}} \sum\limits_{j\in I_{\ell-1}}  {\alpha_j,}
	\end{equation*} 
and hence, since the sequence $(\Lambda_k)_{k=1}^\infty$
is non-decreasing,
\[
\begin{split}
\Vert P_{V_n} f - &P_{V_m} f \Vert_G
 \,\leq\, 2\sum_{\ell \geq s}  \sum\limits_{ j\in I_{\ell-1}}
\frac{\alpha_j \Lambda_{2^{\ell+1}}}{2^{\ell} } 
\\
& \leq 2\sum_{\ell \geq s}  \sum\limits_{ j\in I_{\ell-1}}
\frac{\alpha_j \Lambda_{4j}}{j}   
\,\le\, 2 \sum_{j > \lfloor m/4 \rfloor} \frac{ \alpha_j \Lambda_{4j}}{j},
\end{split}
\]
which is convergent due to the assumption.
This means that $(P_{V_m} f)$ is a Cauchy sequence in $G$.
Since we also know that $P_{V_m} f$ converges to $f$ in $L_2$, 
we obtain that  $(P_{V_m} f)$ is a Cauchy sequence in $G\cap L_2$ and converges to some $g\in G\cap L_2$ by Assumption (B.3). In particular, $f=g$ almost everywhere.
 Thus, using again Assumption (B.3), we obtain 
\[
 \Vert f - P_{V_m} f \Vert_G \,=\,
 \Vert g - P_{V_m} f \Vert_G \,\le\, 
 2 \sum_{j > \lfloor m/4 \rfloor} \frac{ \alpha_j \Lambda_{4j}}{j},
\]
which yields the statement.
 \end{proof}

If we insert the algorithm from Proposition~\ref{prop:L2}
into Lemma~\ref{lem:lift}, we immediately get our main result for approximation in general seminorms, i.e., Theorem~\ref{thm:general} 
{above.}


\bigskip

\section{Improvements for Hilbert spaces}\label{sec:Hilbert}

Our results can be further improved if we consider 
{uniform approximation ({i.e., $G=L_\infty$}) in}
unit balls of 
Hilbert spaces. 
This also leads to some interesting results on 
the power of linear algorithms based on function values 
compared to \emph{arbitrary algorithms}. 
Let us recall that in this case, i.e., 
if the error is measured with $\|\cdot\|_G=\|\cdot\|_\infty$, we approximate 
with respect to the (strong) sup-norm. \\

For a comparison of the power of different algorithms or information, 
one usually takes the Gelfand widths of the class $F$ as the ultimate benchmark. The \emph{Gelfand $n$-width} of $F$ in $G$ is defined by
\begin{align*}
c_n(F, G) \,:=\,
\inf_{\substack{\psi\colon \mathbb{C}^n\to G\\ 
N\colon F\to\mathbb{C}^n \; \text{ linear}}}\; 
\sup_{f\in F}\, 
\big\|f - \psi\circ N(f) \big\|_{G}
\end{align*}
and measures the worst case error of the optimal (possibly non-linear) algorithm using $n$ pieces of arbitrary linear information.
{We clearly have the relations 
$g_n^\lin \ge a_n \ge c_n$ 
whenever function evaluation 
is a continuous linear functional on $F$ (for every $x\in D$). 
Moreover, 
it is a classical result that 
$a_n(F,L_\infty)=c_n(F,L_\infty)$ 
if $F$ is symmetric and convex, 
but they are different in many other situations, 
see \cite[Section~4.2.2]{NW1}.
}

Unfortunately,
Theorem~\ref{thm:general} does not allow for a direct comparison of $g_n^\lin(F,G)$ and $c_n(F,G)$. 
This is clearly a desirable goal
since it corresponds to the question
whether, 
for a fixed ``budget'' ($n$) and under the given 
assumptions on the ``problem instances'' ($f\in F$),
other algorithms may perform better 
than linear algorithms based on function values. 
The subsequent
results show
that the answer is \emph{no} for a certain familiy of classes~$F$.
Namely, by techniques very similar to the ones used for proving Theorem~\ref{thm:general}, we obtain the following 
improvement in the case of 
RKHS 
which satisfy an additional boundedness assumption. 
In this case, we obtain 
that linear sampling algorithms are optimal among all 
(possibly non-linear) algorithms; 
a result that has been observed independently in~\cite[Thm.~2.2]{GW23}. 
{We use the following notation.}

\begin{remark}[Notation] 
{Here and in the following,
if $F$ is a normed space,
we abuse notation
by writing $g_n^\lin(F,G)$ when we actually mean
$g_n^\lin(B_F,G)$ where $B_F$ is the unit ball of $F$. The same is done for the numbers $a_n$ and $c_n$.}
\end{remark}

\medskip

\begin{theorem}\label{thm:H}
There are
absolute constants 
{$c,C \in\mathbb{N}$}
such that the following holds.
Let $\mu$ be 
a measure
on a set $D$ 
and
$H\subset L_2(\mu)$ be a 
reproducing kernel Hilbert space 
with kernel
$$
K(x,y) = \sum_{k=1}^{\infty} \sigma_k^2\, b_k(x)\, \overline{b_k(y)}, 
\qquad x,y\in D, 
$$
where $(\sigma_k)\in \ell_2$ is a non-increasing sequence 
and $\{b_k\}_{k=1}^\infty$
is an orthonormal system in
$L_2$ 
such that there is a constant $B>0$ with
\begin{equation}\label{cond_boundedness}
    \Big\Vert \frac{1}{n}\sum_{k=1}^{n} \left| b_k \right|^2\Big\Vert_{\infty} 
\,\le\, B
\end{equation}
for all $n\in\mathbb{N}$. Under these conditions we have $H \subset L_\infty$ and 
\begin{equation}
\label{eq:Hilbert1}
g_{{cn}}^\lin( H , L_{\infty})^{2} \,\le\, 
 B {\cdot C \cdot}\sum\limits_{k > n}\sigma_k^2. 
\end{equation}
In particular, if $\mu$ is a finite measure, then
\begin{equation}
\label{eq:Hilbert2}
 g_{{cn}}^\lin( H , L_{\infty}) \,\le\, \sqrt{
  {B \cdot C} \cdot \mu(D)}\cdot c_n(H,L_\infty).
\end{equation}
\end{theorem}

\smallskip

\begin{remark}[Tractability]
Theorem~\ref{thm:H} is particularly useful if we 
consider approximation for multivariate functions, 
e.g.~on $D=[0,1]^d$, and are interested 
in the dependence of the complexity on the dimension $d$,
that is, in the tractability of the recovery problem. \\
For example, for uniform approximation of periodic functions, 
the theorem implies that the curse of dimensionality holds for approximation 
based on function values (sometimes called \emph{standard information}) 
if and only if it holds for linear information. 
Details can be found in~\cite{GW23}.
The corresponding statement for $L_2$-recovery is not true, 
see, e.g., \cite{HKNV}, 
but similar results can be obtained if $c_n(H,L_2)$ decays exponentially 
with $n$, see~\cite{KSUW}.
\end{remark}

\begin{remark}[$s$-numbers]
The numbers 
$a_n$ and $c_n$ 
are well-studied objects in approximation theory and 
the geometry of Banach spaces, and are therefore well-known 
for many classes $F$. 
In fact, they fall under the general category of \emph{$s$-numbers} 
which allows the use of some fundamental techniques. 
We refer to~\cite{Diestel-book,Pietsch-s} for a general treatment of this, 
and to~\cite{DTU18,KKLT,Tem18} for more approximation theoretic 
background. \\
In contrast, the numbers $g_n^\lin$ 
do not (yet) 
fit into such a general framework, and their dependence on $n$ is 
often still unknown. 
It is therefore of interest, and one of the 
major concerns of 
\emph{information-based complexity}, 
to determine the relations of all the above widths depending on 
properties of the classes $F$, 
see e.g.~\cite{NW1,NW3,TWW88}. 
\end{remark}

Theorem~\ref{thm:H} 
represents a special case of a more general result for reproducing kernel Hilbert spaces 
where we do not need the uniform boundedness of the basis, see our Theorem~\ref{thm:H2} below, 
and \cite[Thm.~2.1]{GW23}.
These results further improve on the previous results of \cite{KWW08} in the Hilbert space setting, 
see~\cite[Sect.~5]{PU} for a discussion.

Another general bound in this context is due to Novak, see~\cite[Prop.~1.2.5]{Novak} or \cite[Thm.~29.7]{NW3}, 
which states that 
\begin{equation}\label{eq:Novak}
g_n^\lin(F,L_\infty) 
 \,\le\, (1+n)\,c_n(F,L_\infty)
\end{equation}
whenever $F$ is a Banach space. 
In fact, \eqref{eq:Novak} holds with the (smaller) Kolmogorov widths in $L_\infty$ on the right hand side, see \cite{Novak,NW3} for details. 
The factor $(1+n)$ cannot be improved:
For each $n$, there is even a Hilbert space  where \eqref{eq:Novak} is an equality, 
see~\cite[Example~29.8]{NW3}. \\

In the paper \cite{KPUU23_2} it is shown that the factor $(1+n)$ can be improved to $\sqrt{n}$ when allowing for a constant oversampling factor, i.e., $c\cdot n$ sampling points 
instead of $n$. 

Let us emphasize  that the result in Theorem \ref{thm:H} applies to several important 
cases of orthonormal systems, which are listed in Remark~\ref{rem:minimal-Chris}. 
Moreover, 
we do not require any 
decay of the Gelfand width $c_n$.
In this 
sense, the comparison statement is even stronger than the recently obtained 
result for $L_2$-approximation {discussed above}. 

{In particular, }
there are many cases where 
the minimal error $c_n(F,L_2)$ of non-linear algorithms decays much faster  {than the minimal error $a_n(F, L_2)$ of linear ones}.
For example, for the class $W^s_1=W^s_1([0,1])$, $s\ge2$, 
i.e., 
the unit ball of the 
univariate 
 {Sobolev}
space 
on the interval equipped with the norm 
$\|f\|_{W^s_p}:=\|f\|_p+\|f^{(s)}\|_p$, 
we have 
$a_n(W^s_1,L_2)\asymp \sqrt{n}\, c_{n}(W^s_1,L_2)\asymp n^{-s+1/2}$, 
see e.g.~\cite[Theorem~VII.1.1]{Pinkus85} and the references therein.

\goodbreak 

In contrast, Theorem~\ref{thm:H} above shows that
the optimality of linear algorithms based on function values
is true for uniform approximation
without assumption on the main rate $\alpha$,  
or restriction to linear algorithms, 
at least for certain Hilbert spaces.


{
For proving our main result, we need some prerequisites.  
Afterwards, we will basically repeat the steps from above in this setting.}

\subsection{Preliminaries}
Let
$H$ be a RKHS of complex-valued functions defined on 
a set $D$ with a 
bounded kernel $K\colon D\times D \rightarrow \mathbb{C}$  of the form 
\begin{equation}\label{kernel}
K(x,y) = \sum_{k=1}^{\infty}\sigma_k^2\, b_k(x) \overline{b_k(y)},
\end{equation}
with pointwise convergence on $D \times D$, where $\sigma_1\ge \sigma_2\ge \hdots \ge 0$
 and $\{b_k\}_{k=1}^\infty$ is orthonormal in $L_2(\mu)$ w.r.t.\ the measure $\mu$.

For the embedding operator $\Id\colon H \rightarrow L_2(\mu)$, 
the sequence $( \sigma_k)_{k=1}^{\infty}$ is the sequence of its singular numbers. 
Further we put $e_k(x) := \sigma_k  b_k (x) $, $k=1, 2, \dots$.
 The crucial property for functions $f$ from the RKHS $H$ is the identity
 \begin{displaymath}
 	f(x) = \ipr{f}{K(\cdot,x)}_H
 \end{displaymath}
 for all $x \in D$. It ensures that point evaluations are continuous functionals on $H$. 
 
 In the framework of this paper, the boundedness of the kernel $K$ is assumed, i.e.,
 \begin{equation} \label{CK000}
 	\|K\|_{\infty} := \sup\limits_{x \in D} \sqrt{K(x,x)} < \infty.
 \end{equation}
 The condition (\ref{CK000}) implies that $H$ is continuously embedded into $L_\infty$, i.e.,
 \begin{displaymath}
 	\|f\|_\infty \leq  \|K\|_{\infty}\cdot\|f\|_{H}\,.
 \end{displaymath}

 Taking into account \cite[Lemma 2.6]{Steinwart_Scovel2012},
 we have that the set $\{e_k\}_{k=1}^{\infty}$ 
is an orthonormal basis in $H$. 
 Hence, for every $f\in H$, it
  {holds that}
 \begin{equation*}
 	f = \sum_{k=1}^{\infty} \ipr{f}{e_k}_{H} e_k,
 \end{equation*}
 where $\ipr{f}{e_k}_{H}$, $k=1, 2, \dots$, are the Fourier coefficients of $f$ with respect to the system $\{ e_k\}_{k=1}^{\infty}$. Let us further 
  {define}
 \begin{equation*}
 	P_mf := \sum\limits_{k=1}^{m} \ipr{f}{e_k}_{H} e_k
 \end{equation*}
 the projection onto the space $\operatorname{span}\{e_1,...,e_{m}\}$,
which we denote by $V_m$.
 Due to Parseval's identity, the norm of the space $H$ takes the following form
 \begin{displaymath}
 	\|f\|^2_{H} =   \sum_{k=1}^{\infty} | \ipr{f}{e_k}_{H} |^2 .
 \end{displaymath}
We also define
 \[
  K_m(x,y) := \sum_{k>m} \sigma_k^2 b_k(x) \overline{b_k(y)},
 \]
 i.e., the reproducing kernel of the orthogonal complement of $V_m$ in $H$.
 {In this section, we only consider uniform appproximation and the corresponding Christoffel function of the subspaces $V_m$, i.e.,
 \begin{equation}\label{eq:Christoffel-H}
     \Lambda_m \,=\, \Lambda(V_m,L_\infty)
     \,=\, \sup\limits_{f \in V_m, \, f\neq 0} \frac{\|f\|_\infty}{\|f\|_2}.
 \end{equation}}

 \subsection{{The result}} 
 
Using the Christoffel function \eqref{eq:Christoffel-H},
 we can {again} lift $L_2$-error bounds
 for $V_m$-valued algorithms
 to $L_\infty$-error bounds.
 
 \begin{lemma}\label{lem:liftH}
 For any mapping $A\colon H \to V_m$ and all $f\in H$, we have
 \begin{equation}\label{eq:liftH}
  \Vert f - Af\Vert_\infty\,\le\, 
  \Vert K_m \Vert_\infty
  \cdot \Vert f-P_m f\Vert_{H}
  \,+\, \Lambda_m \cdot \Vert f-Af\Vert_2,
 \end{equation}
 where
 \[
 \Vert K_m \Vert_\infty^2
 \,\le\, 2 \sum\limits_{k> \lfloor m/4 \rfloor}\frac{\Lambda_{4k}^2\sigma_k^2}{k}.
 \]
 \end{lemma}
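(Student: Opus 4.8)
\emph{Proof plan.} The plan is to establish \eqref{eq:liftH} by a triangle inequality that splits the error into a ``tail'' part, controlled by the reproducing kernel $K_m$, and a ``projection'' part, controlled by the Christoffel function. First I would write $f-Af = (f-P_mf)+(P_mf-Af)$ and bound the two summands separately in the sup-norm. For the first summand, note that $g:=f-P_mf=\sum_{k>m}\ipr{f}{e_k}_H\, e_k$ lies in the orthogonal complement of $V_m$ in $H$, which is itself an RKHS with reproducing kernel $K_m$. Hence the reproducing property together with Cauchy--Schwarz gives $|g(x)| = |\ipr{g}{K_m(\cdot,x)}_H| \le \|g\|_H\,\sqrt{K_m(x,x)}$ for every $x\in D$, so that $\|f-P_mf\|_\infty \le \|K_m\|_\infty^{1/2}\,\|f-P_mf\|_H$ with $\|K_m\|_\infty := \sup_{x\in D}K_m(x,x)$.

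For the second summand I would use that $P_mf-Af\in V_m$, whence the definition \eqref{eq:cf} of $\Lambda_m$ yields $\|P_mf-Af\|_\infty \le \Lambda_m\,\|P_mf-Af\|_2$. The key observation here is that the $H$-orthogonal projection $P_m$ coincides with the $L_2$-orthogonal projection onto $V_m$, since $\ipr{f}{b_j}_{L_2}=\sigma_j\,\ipr{f}{e_j}_H$ for the systems at hand. As $Af\in V_m$ is fixed by $P_m$, we get $P_mf-Af = P_m(f-Af)$ and therefore $\|P_mf-Af\|_2 \le \|f-Af\|_2$ by contractivity of an orthogonal projection. Combining the two bounds gives exactly \eqref{eq:liftH}.

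The substantial part is the estimate for $\|K_m\|_\infty = \sup_x\sum_{k>m}\sigma_k^2|b_k(x)|^2$, which I would prove by a dyadic summation-by-parts argument exploiting the monotonicity of $(\sigma_k^2)$ together with the uniform bound $\sum_{k=1}^n|b_k(x)|^2 \le \Lambda_n^2$. Decomposing $(m,\infty)$ into dyadic blocks $(a,2a]$ with $a\in\{m,2m,4m,\dots\}$, one bounds each block by pulling out the largest weight and using the Christoffel bound on the full partial sum:
\[
\sum_{k\in(a,2a]}\sigma_k^2|b_k(x)|^2 \;\le\; \sigma_{a+1}^2\sum_{k=1}^{2a}|b_k(x)|^2 \;\le\; \sigma_{a+1}^2\,\Lambda_{2a}^2.
\]
To convert this sum of block-bounds back into the weighted series $\sum_k \Lambda_{4k}^2\sigma_k^2/k$, I would compare each quantity $\sigma_{a+1}^2\Lambda_{2a}^2$ with the contribution of the \emph{preceding} range $(a/2,a]$: for $k$ in that range one has $4k>2a$, hence $\Lambda_{4k}\ge\Lambda_{2a}$, while $\sigma_k\ge\sigma_{a+1}$ and $1/k\ge 1/a$, so that $\sigma_{a+1}^2\Lambda_{2a}^2 \le 2\sum_{k\in(a/2,a]}\Lambda_{4k}^2\sigma_k^2/k$. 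Since the shifted ranges $(a/2,a]$ are pairwise disjoint and tile $(m/2,\infty)$, and $\lfloor m/4\rfloor\le m/2$ with all terms nonnegative, summing over the blocks yields $\|K_m\|_\infty\le 2\sum_{k>\lfloor m/4\rfloor}\Lambda_{4k}^2\sigma_k^2/k$.

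The main obstacle is precisely this last conversion step: a naive block-by-block comparison fails because within a block $(a,2a]$ the weight $\sigma_k^2$ can be as small as $\sigma_{2a}^2$, which is not comparable to the $\sigma_{a+1}^2$ produced by the block estimate without an (unavailable) regularity assumption on $(\sigma_k)$. Shifting the comparison to the preceding dyadic range circumvents this and is exactly the reason the indices $4k$ and $\lfloor m/4\rfloor$ appear; the remaining care is purely bookkeeping of the integer endpoints so that the shifted ranges genuinely cover the index set $\{k>\lfloor m/4\rfloor\}$.
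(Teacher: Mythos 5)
Your proof is correct and follows essentially the same route as the paper's: the same triangle-inequality split, the reproducing property of $K_m$ for the tail term, the Christoffel bound together with $L_2$-contractivity of $P_m$ for the projection term, and a dyadic blocking of the kernel sum compared against the preceding block using the monotonicity of $(\sigma_k)$ and $(\Lambda_n)$. The only differences are cosmetic: your dyadic blocks are anchored at $m$ rather than at powers of two (which in fact yields the slightly stronger range $k>\lfloor m/2\rfloor$ before you relax it), and you make explicit, via $\ipr{f}{b_j}_2=\sigma_j\ipr{f}{e_j}_H$, the identification of $P_m$ with the $L_2$-orthogonal projection, which the paper uses implicitly.
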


  \begin{proof}
By the triangle inequality,
 \begin{equation}\label{Th1_splitting}
	\Vert f - A f \Vert_\infty
\,\le\, \Vert f - P_m f \Vert_\infty + \Vert P_m f - A f \Vert_\infty. 
\end{equation}
To estimate the first term in (\ref{Th1_splitting}), we 
use that $(\Id - P_m)^2 = \Id - P_m$ and that the operator $\Id - P_m$ is self-adjoint, 
and get, for all $x\in D$,
\begin{equation*}
	\begin{split}	\abs{(f - P_m f)(x)}^2  &
		= \abs{ \ipr{(\Id - P_m) f}{K(\cdot, x)}_H }^2\\
		&= \abs{ \ipr{(\Id - P_m)^2 f}{K(\cdot, x)}_H }^2
\\
&= \abs{ \ipr{(\Id - P_m) f}{(\Id - P_m) K(\cdot, x)}_H }^2\\
&= \abs{ \ipr{f - P_m f}{K_m(\cdot, x)}_H }^2\\
		&\leq
		\|f - P_m f\|_{H}^2 \cdot K_m(x,x).
	\end{split}
\end{equation*}	
We turn to the second term in \eqref{Th1_splitting}.
Since $A$ maps to $V_m$, we have $A f = P_m (A f)$, and hence
\begin{align*}
\Vert P_m f - &A f \Vert_\infty  = \Vert P_m (f - A f) \Vert_\infty \\
&\leq \Lambda_m \cdot \Vert  P_m( f - A f) \Vert_{2}
\leq \Lambda_m \cdot \Vert  f - A f \Vert_{2}.
\end{align*}
This gives \eqref{eq:liftH}. It remains to bound $K_m(x,x)$.
Using that the sequence $( \sigma_k)_{k=1}^{\infty}$ is non-increasing, 
we obtain for $2^\ell \leq k < 2^{\ell+1}$ that 
$$
\sigma_k^2 \leq  \frac{1}{2^{\ell-1}} \sum\limits_{ 2^{\ell-1}< j \le 2^\ell}\sigma_j^2,
$$
which implies  {by (\ref{gamma2}) that}
\begin{equation*}
K_m(x,x)
		\le \sum_{\ell \geq \lfloor \log m \rfloor} \sum_{2^\ell \leq k < 2^{\ell+1}}
		\sigma_k^2 \abs{b_k(x)}^2
 \leq   \sum_{\ell \geq \lfloor \log m \rfloor}
		\frac{ \Lambda_{2^{\ell+1}}^2 }{2^{\ell-1}} \sum_{ 2^{\ell-1}< j \le 2^\ell}\sigma_j^2 \, .
\end{equation*}	
Hence, in view of the relations $2^{\ell+1} \leq 4j$ and $1/2^{\ell-1} \le 2/j$, that are true for all $2^{\ell-1}< j \le 2^\ell$, we get
\begin{equation*}
K_m(x,x)
	\,\leq \sum_{\ell  \geq \lfloor \log m \rfloor} \sum_{ 2^{\ell-1}< j \le 2^\ell}  \frac{2 \Lambda_{4j}^2}{j} \sigma_j^2
	 \,\leq\, 2 \sum\limits_{k > \lfloor m/4 \rfloor}\frac{\Lambda_{4k}^2\sigma_k^2}{k}.
\end{equation*}
 \end{proof}

According to \cite[Thm.~23]{DKU}, 
there is a linear algorithm 
\begin{equation*}
A_m\colon H \to V_m, 
\quad A_m(f) = \varphi(f(x_1),\hdots,f(x_{n}))
\end{equation*}
using $n\le c m$ samples, 
satisfying the $L_2$-error bound
\begin{align}\label{eq:DKU}
\Vert f - A_m f \Vert_2^2 \,&\le\, 
433 \,\max\bigg\{\sigma_{m+1}^2, \frac{1}{m} \sum_{k  > m } \sigma_k^2\bigg\}\cdot  \Vert f - P_m f \Vert_H^2\\ \nonumber
&\le\, \frac{866}{m} \sum_{k  > \lfloor m/2 \rfloor } \sigma_k^2 \cdot  \Vert f - P_m f \Vert_H^2
\end{align}
for all $f\in H$, where $c>0$ is a universal constant.
If we plug this algorithm into Lemma~\ref{lem:liftH},
we immediately get the following result.
Recall that this result has recently also been observed in \cite{GW23}.

 \begin{theorem}\label{thm:H2}
There is an
absolute constant
$c>0$,
such that the following holds.
Let $(D,\Sigma,\mu)$ be a measure space
and
$H\subset L_2(\mu)$ be a 
RKHS with bounded kernel $K\colon D\times D\to \mathbb{C}$ of the form \eqref{kernel} and $\Lambda_m$ be defined {as in \eqref{eq:Christoffel-H}},
where $V_m=\operatorname{span}\{b_1, \dots, b_m\}$.
Then it holds for all $m\in\mathbb{N}$ that
\[
 g^\lin_{cm}( H, L_{\infty}) \,\le\,
 \sqrt{\frac{866 \,\Lambda_m^2}{m}\sum_{k > \lfloor m/2 \rfloor}\sigma_k^2} \,+\,
 \sqrt{2 \sum_{k > \lfloor m/4 \rfloor} \frac{\Lambda_{4k}^2\sigma_k^2}{k}} \, .
\]
\end{theorem}
 
This estimate simplifies a lot
in the case that $\Lambda_m^2$ grows linearly with $m$,
as we stated in Theorem~\ref{thm:H}.
In this case, our upper bounds are sharp up to constants
due to a matching lower bound on the Gelfand width in $L_\infty$. 
In fact, it has been shown
in \cite[Thm.~3]{Osipenko_Parfenov1995}
that
\begin{equation}\label{cobos}
	\sum_{k>m} \sigma_k^2  \,\leq\,
	\mu(D) \cdot c_m(H , L_{\infty})^2, 
\end{equation}
see also
\cite[Lemma~3.3]{CKS}, \cite[Proposition~2.7]{Kunsch2018}, and \cite[Thm.~4]{KWW08}.

\begin{proof}[Proof of Theorem~\ref{thm:H}]
We apply Theorem~\ref{thm:H2} with $4n$ instead of $m$ 
{and note that $\Lambda_n^2 \le B n$ to obtain \eqref{eq:Hilbert1}.
The bound \eqref{eq:Hilbert2} follows from \eqref{eq:Hilbert1} due to \eqref{cobos}.}
{Respectively, the constant $C$ in the upper bound can be taken as $C=33\geq \sqrt{866}+\sqrt{4\cdot 2}$.}
\end{proof}

\begin{remark}
The assumption on the basis in Theorem~\ref{thm:H} is clearly fulfilled 
if the basis is uniformly bounded, 
i.e., if 
\[
 \sup_{k\in \mathbb{N}}\, \sup_{x \in D} \vert b_k(x)  \vert \,\leq\, B.
\]
For example, this is the case for the trigonometric system on the unit cube with $B=1$.
In this case,
we can improve the constants
if we use the equality $\Vert K_m\Vert_\infty^2 = \sum_{k> m} \sigma_k^2$
in Lemma~\ref{lem:liftH}.
\end{remark}


\bigskip
\goodbreak

\addtocontents{toc}{\protect\setcounter{tocdepth}{2}}
\section{Examples} 
\label{sec:ex}

We now apply our results to a wide range of Hilbert as well as non-Hilbert classes of functions.

\subsection{Spaces of periodic functions} \, 
\label{subsec:per}

\sloppy
  Let $\mathbb{T}= [0, 2 \pi]$ be a torus where the endpoints of the interval are identified  {with each other}. By $\mathbb{T}^d$ we denote a $d$-dimensional torus and equip it with the normalized Lebesgue measure $(2\pi)^{-d} {\rm d} \bx$. For functions 
  $f\in L_1:=L_{1}(\mathbb{T}^d)$ we define
  the Fourier coefficients 
$	f_\bk= (2\pi)^{-d} \int_{\mathbb{T}^d} f(\bx) e^{-i \bk \cdot \bx} {\rm d} \bx$, $\bk\in \mathbb{Z}^d$,
 w.r.t.\ the trigonometric system
$\{ e^{i \bk \cdot \bx } \colon\allowbreak \bk \in \mathbb{Z}^d \}$
which forms a $1$-bounded orthonormal basis of $L_{2}$, 
i.e., for 
$f\in L_{2}$
we have 
$f(\bx) = \sum_{\bk\in \mathbb{Z}^d} f_\bk  e^{i \bk \cdot \bx}$ 
in the sense of convergence in $L_2$.
 {Note that since the system of trigonometric monomials satisfies condition (\ref{cond_boundedness}), it suffices to use a plain least squares algorithm for the recovery, see also Remark~\ref{rem:alg}.}

\subsubsection{Hilbert spaces on the $d$-torus}
Let us consider the space $H^w$ of 
integrable 
functions $f\in L_1(\mathbb{T}^d)$ 
such that
\begin{displaymath}
	\| f \|^2_{H^w} :=	 \sum_{\bk\in \mathbb{Z}^d} (w(\bk))^2 | f_\bk |^2  
    < \infty,
\end{displaymath}
where $w(\bk)>0$, $\bk\in \mathbb{Z}^d$, are certain weights which satisfy the condition 
\begin{equation}\label{sum_1/w^2}
	\sum_{\bk\in \mathbb{Z}^d} (w(\bk))^{-2} < \infty \,
\end{equation}
and $f_\bk$ are the Fourier coefficients w.r.t.\ the trigonometric system
$\left\{ e^{i \bk \cdot \bx }\colon \bk \in \mathbb{Z}^d \right\} =: \{b_k \colon k\in \mathbb{N}\}$.
{It was shown in~\cite[Thm.~3.1]{CKS} that the condition (\ref{sum_1/w^2}) on the weights
$w(\bk)$ is necessary and sufficient for the existence of a compact embedding $H^w \hookrightarrow L_{\infty}$.
The space $H^w$ is a RKHS with kernel
\begin{equation*}
	K_w(\bx, \by) = \sum_{ \bk\in \mathbb{Z}^d } \frac{ e^{i \bk \cdot (\bx - \by)}  }{ \left( w(\bk) \right)^2 }\,,
\end{equation*}
which is bounded under the condition (\ref{sum_1/w^2}).}
We get the following.

\begin{corollary}\label{SC_general_weights}
There are absolute constants
$C,c>0$,
such that for the RKHSs $H^w$ introduced above it holds for all $m\in \mathbb{N}$ that 
	$$
	g_{cm}^{\lin}( H^w, L_{\infty}) \leq  C \sqrt{ \sum\limits_{k>m}	\sigma_k^2}
	\,\le\, 
C\cdot c_{m}(H^w, L_{\infty}), 
	$$
where $\{\sigma_k \colon k \in \mathbb{N} \}$ is the non-increasing rearrangement of the sequence  $\{ (w(\bk))^{-1}\colon \bk \in \mathbb{Z}^d \}$.
\end{corollary}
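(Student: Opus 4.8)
The plan is to recognize $H^w$ as a reproducing kernel Hilbert space of exactly the form required by Theorem~\ref{thm:intro-H} and then verify its hypotheses. First I would write down the kernel: since $\{(w(\bk))^{-1}e^{i\bk\cdot\bx}\}_{\bk\in\Z^d}$ is an orthonormal basis of $H^w$, the reproducing kernel is
\[
K(\bx,\by)=\sum_{\bk\in\Z^d}(w(\bk))^{-2}\,e^{i\bk\cdot\bx}\,\overline{e^{i\bk\cdot\by}},
\]
which is bounded because $K(\bx,\bx)=\sum_{\bk\in\Z^d}(w(\bk))^{-2}<\infty$ by the standing assumption on the weights. This matches the representation \eqref{kernel} with $b_\bk(\bx)=e^{i\bk\cdot\bx}$, a $1$-bounded orthonormal system in $L_2$, and singular values $(w(\bk))^{-1}$.

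Next I would pass to a non-increasing enumeration: let $\{\sigma_k\}_{k\in\N}$ be the non-increasing rearrangement of $\{(w(\bk))^{-1}:\bk\in\Z^d\}$ and reindex the monomials $b_k$ accordingly, so that $\sigma_1\ge\sigma_2\ge\cdots$ and $(\sigma_k)\in\ell_2$. Since every trigonometric monomial has modulus one, the uniform boundedness hypothesis of Theorem~\ref{thm:intro-H} holds with $B=1$, as for every $n$ we have $\big\|\tfrac1n\sum_{k=1}^n|b_k|^2\big\|_\infty=1$. Moreover $\mu$ is the normalized Lebesgue measure, so $\mu(\mathbb{T}^d)=1$ is finite.

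Applying Theorem~\ref{thm:intro-H} with these data yields, for the absolute constants $b,c$ provided there, $g_{bm}^{\lin}(H^w,L_\infty)^2\le c\sum_{k>m}\sigma_k^2$, which is the first claimed inequality once the sampling constant in the corollary is set to $b$ and $C:=\sqrt{c}$. The second inequality is precisely the finite-measure conclusion of the same theorem, $g_{bm}^{\lin}(H^w,L_\infty)\le\sqrt{c\,\mu(\mathbb{T}^d)}\,c_m(H^w,L_\infty)=\sqrt{c}\,c_m(H^w,L_\infty)$; equivalently it follows directly from the lower bound \eqref{cobos}, namely $\sum_{k>m}\sigma_k^2\le\mu(\mathbb{T}^d)\,c_m(H^w,L_\infty)^2$.

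I do not expect a genuine obstacle: the entire content is the observation that the trigonometric system realizes the boundedness assumption with the optimal constant $B=1$, so Theorem~\ref{thm:intro-H} applies verbatim. The only point requiring minor care is the bookkeeping of the rearrangement — checking that the reindexed $b_k$ remain orthonormal and $1$-bounded (which is automatic) and that the tail $\sum_{k>m}\sigma_k^2$ is the rearrangement-invariant tail of $\{(w(\bk))^{-2}\}_{\bk\in\Z^d}$, so that the stated bound does not depend on how ties among equal weights are broken.
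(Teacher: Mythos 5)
Your proposal is correct and follows essentially the same route as the paper: identify the singular numbers of $\Id\colon H^w\to L_2$ with the non-increasing rearrangement of $\{(w(\bk))^{-1}\}$, observe that the trigonometric system is a $1$-bounded orthonormal basis so that the hypothesis of Theorem~\ref{thm:intro-H} holds with $B=1$, and apply that theorem together with \eqref{cobos} (using $\mu(\mathbb{T}^d)=1$) for the comparison with $c_m(H^w,L_\infty)$. Your write-up is merely more explicit than the paper's two-line argument about the kernel's boundedness and the rearrangement bookkeeping, which is fine.
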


\begin{proof}[Proof of Corollary~\ref{SC_general_weights}]
The singular numbers $\{\sigma_k \colon k \in \mathbb{N} \}$ of the operator 
$\Id \colon H^w \to L_{2}$ coincide with 
$(w(\bk))^{-1}$, which are rearranged in 
a non-increasing order.
We apply Theorem~\ref{thm:H} with
$\{ b_k \colon k \in \mathbb{N} \} = \{ e^{i \bk \cdot \bx }  \colon \bk \in \mathbb{Z}^d \}$,
where
$\Vert b_k  \Vert_\infty \leq 1$
for all $k\in\mathbb{N}$.
\end{proof}

	The result of Corollary \ref{SC_general_weights} improves the estimate from \cite[Thm. 6.2]{PU} where the authors have an additional $\sqrt{\log m}$-factor on the right hand side. 

We also mention here the earlier results \cite[Theorem~4.11]{Kam19},  
\cite{Kam_Vol19}, \cite{Zeng_Kritzer_Hickernell2009}.

In particular, if $w(\bk) =  \prod_{j=1}^d (1+  |k_j|)^{s}$\,, $\bk=(k_1,\dots,k_d)\in \mathbb{Z}^d $,  $s > 1/2$, then for the unit ball of the periodic Sobolev space 
\begin{equation*}
	H^{s}_{\mix} = \Big\{ f\in L_2 \colon \, 
	\sum_{ \bk\in \mathbb{Z}^d }  | f_\bk |^2  \prod_{j=1}^d (1+  |k_j|)^{2s}  \le 1 \Big\},
\end{equation*}
see, e.g., \cite{KSU}, 
we obtain the following.

\begin{theorem}\label{Th_Sobolev_type_asymptotic} 
	For all
	$s>1/2$, it holds that 
	$$
	g_
	{m}^{\lin}( 
	H^{s}_{\mix}, L_{\infty}) 
	\,\asymp_{s, d} 
        m^{-s+1/2}\,   \left(  \log m \right)^{s(d-1)}.
	$$	
\end{theorem}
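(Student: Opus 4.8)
The plan is to invoke Corollary~\ref{SC_general_weights} and thereby reduce the entire statement to a single combinatorial tail estimate for the rearranged singular values. First I would note that $H^{s}_{\mix}$ is precisely the unit ball of the weighted space $H^w$ with $w(\bk) = \prod_{j=1}^d (1+|k_j|)^{s}$, and that the RKHS hypothesis $\sum_{\bk\in\Z^d} w(\bk)^{-2} = \prod_{j=1}^d \sum_{k\in\Z}(1+|k|)^{-2s} < \infty$ holds exactly because $s>1/2$. Since $\mathbb{T}^d$ carries a finite (normalized) measure, Corollary~\ref{SC_general_weights} applies and yields
$$
g_{cm}^{\lin}(H^{s}_{\mix}, L_{\infty}) \,\le\, C\sqrt{\sum_{k>m}\sigma_k^2},
$$
where $(\sigma_k)$ is the non-increasing rearrangement of $\{\prod_{j=1}^d(1+|k_j|)^{-s} : \bk\in\Z^d\}$. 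Thus the theorem is equivalent to the estimate $\sum_{k>m}\sigma_k^2 \lesssim_{s,d} m^{1-2s}(\log m)^{2s(d-1)}$.

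Next I would introduce the hyperbolic-cross counting function
$$
N(t) \,:=\, \#\Big\{\bk\in\Z^d : \prod_{j=1}^d(1+|k_j|)\le t\Big\}
$$
and recall the classical asymptotic $N(t)\asymp_d t(\log t)^{d-1}$ for $t\ge 2$, which follows from the Dirichlet divisor bound $\sum_{n\le t}\tau_d(n)\asymp_d t(\log t)^{d-1}$ (the sign choices $k_j=\pm(n-1)$ affect only the constant). Setting $\lambda_m:=\sigma_m^2$ and $T:=\lambda_m^{-1/(2s)}$, the $m$-th largest value sits on the threshold $\prod_j(1+|k_j|)=T$, so $m\asymp N(T)\asymp T(\log T)^{d-1}$, giving $\log T\asymp\log m$ and $T\asymp m(\log m)^{-(d-1)}$.

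Finally I would compute the tail by Abel summation against $N(t)$, treating it as a Stieltjes integral:
$$
\sum_{k>m}\sigma_k^2 = \sum_{\bk:\,\prod_j(1+|k_j|)\ge T}\prod_{j=1}^d(1+|k_j|)^{-2s} \asymp \int_T^\infty t^{-2s}\,dN(t) \asymp \int_T^\infty t^{-2s}(\log t)^{d-1}\,dt \asymp_{s,d} T^{1-2s}(\log T)^{d-1},
$$
where convergence uses $2s>1$. Substituting $T\asymp m(\log m)^{-(d-1)}$ and $\log T\asymp\log m$ produces the log exponent $(d-1)(2s-1)+(d-1)=2s(d-1)$, hence $\sum_{k>m}\sigma_k^2\asymp_{s,d} m^{1-2s}(\log m)^{2s(d-1)}$. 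Taking square roots gives $m^{1/2-s}(\log m)^{s(d-1)}$, as claimed.

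The main obstacle is the last step: converting the monotone rearrangement $\sum_{k>m}\sigma_k^2$ into the threshold sum and getting the power of $\log m$ \emph{exactly} right. One must match $N(t)$ against the rearrangement in both directions so that the final logarithmic exponent is $2s(d-1)$ rather than off by a $(d-1)$-power; the delicate point is that $\log T$ and $\log m$ agree only up to additive constants, which have to be absorbed into the $s,d$-dependent implied constants, and the precise identity $(2s-1)(d-1)+(d-1)=2s(d-1)$ is what makes the bookkeeping close up cleanly.
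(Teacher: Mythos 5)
Your proof is correct, and its skeleton is the same as the paper's: both invoke Corollary~\ref{SC_general_weights} to reduce the theorem to the tail bound $\sum_{k>m}\sigma_k^2 \lesssim_{s,d} m^{1-2s}(\log m)^{2s(d-1)}$ for the rearranged singular values. The difference lies in how that tail is obtained. The paper cites the classical estimate $\sigma_m \lesssim_{s,d} m^{-s}(\log m)^{s(d-1)}$ for the individual singular numbers \cite{Bab,Mit} and then sums, $\sum_{k>m}k^{-2s}(\log k)^{2s(d-1)} \lesssim_{s,d} m^{-2s+1}(\log m)^{2s(d-1)}$, a two-line computation. You instead re-derive the tail from first principles: hyperbolic-cross counting $N(t)\asymp_d t(\log t)^{d-1}$, the threshold $T\asymp m(\log m)^{-(d-1)}$ with $\log T\asymp\log m$, and partial summation of $\int_T^\infty t^{-2s}\,dN(t)$. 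In effect you are proving (in the tail form needed here) the very result the paper cites, so your argument is self-contained where the paper's is reference-based, and as a by-product you obtain the two-sided estimate $\sum_{k>m}\sigma_k^2\asymp_{s,d} m^{1-2s}(\log m)^{2s(d-1)}$ rather than only the upper bound. The cost is the extra bookkeeping you yourself identify: ties at the threshold level (your displayed equality between the rearranged tail and the threshold sum is really a two-sided comparison, since the terms sitting exactly at level $\prod_j(1+|k_j|)=T$ contribute at most $T^{-2s+o(1)}$, negligible against $T^{1-2s}(\log T)^{d-1}$), and the absorption of the discrepancy between $\log T$ and $\log m$ into $s,d$-dependent constants; both points are handled correctly, and the exponent identity $(2s-1)(d-1)+(d-1)=2s(d-1)$ closes the argument exactly as you say.
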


\medskip

This result is known, see~\cite{Tem93} or~\cite[Theorem~5.4.1]{DTU18}, 
and the upper bound is achieved by sparse grids. 
Here, we show that the same bound is achieved 
by a suitable least squares method, 
see Remark~\ref{rem:alg}.

\begin{proof}[Proof of Theorem~\ref{Th_Sobolev_type_asymptotic}]
It is known \cite{Bab,Mit} that for the singular numbers $\sigma_m = c_m(H^{s}_{\mix},  L_2)$ it 
 {holds that}
	$\sigma_m \lesssim_{s,d} m^{-s} (\log m)^{s(d-1)} \,$.
Hence, Theorem~\ref{Th_Sobolev_type_asymptotic} follows 
 immediately from 
Corollary~\ref{SC_general_weights}
   and the calculations:
$$
		\sum_{k
	>m}	\sigma_k^2
		\,\lesssim_{s,d}\,
		\sum_{k>m }  k^{-2s} (\log k)^{2s(d-1)}
		\,\lesssim_{s,d}\, m^{-2s+1}   \left(  \log m \right)^{2s(d-1)}.
$$
\end{proof}

\subsubsection{Sobolev- and 
Nikol'skii-Besov spaces
on the $d$-torus}

Here, we consider the unit balls of the mixed order Sobolev spaces $\mathbf{W}^r_p$ and 
Nikol'skii-Besov spaces $\mathbf{H}^r_p$ on the $d$-dimensional torus $D=\mathbb{T}^d$. 

For $\boldsymbol{s}=(s_1, \dots, s_d)$ with $s_j \in \mathbb{N}_0$, $j=1, \dots, d$, we define the sets
$$
\rho(\boldsymbol{s}):= \left\{ (k_1,\dots,k_d)\in \mathbb{Z}^d \colon \ \lfloor2^{s_j-1} \rfloor\leq \abs{k_j}< 2^{s_j}, \ j=1, \dots, d  \right\},
$$
and the so-called
step $\Omega_\ell$ and smooth $\Omega_\ell'$ hyperbolic crosses 
\[
  \Omega_\ell \,:=\, \bigcup_{\abs{\boldsymbol{s}}_1 \leq \ell }  \rho(\boldsymbol{s})
  \,
  \subset \, \bigg\{ (k_1,\dots,k_d) \in \mathbb{Z}^d\colon \prod_{j=1}^{d}\max \{1,  \abs{k_j}\}< 2^\ell   \bigg\} =: \Omega_\ell'.
\]

The cardinalities
$\abs{\Omega_\ell}$ and {$\abs{\Omega_\ell'}$ are}  
of order $2^\ell \ell^{d-1}$.
For $f \in L_1$ and $\bx\in\mathbb T^d$ we set 
$$
\delta_{\boldsymbol{s}} (f)(\bx) \,:=\,
\sum_{\bk \in \rho(\boldsymbol{s})} f_\bk \, e^{i \bk \cdot \bx}.
$$

We now define, for $1<p<\infty$ and $r>0$, the classes
 $$
\mathbf{H}^r_p \,:=\, \Big\{ f\in L_1
\colon \; \sup_{\boldsymbol{s}\in\mathbb{N}_0^d} \, 2^{r\abs{\boldsymbol{s}}_1}\,
\Vert \delta_{\boldsymbol{s}} (f) \Vert_{p}  \le1 \Big\}
$$
and 
 $$
\mathbf{W}^r_p \,:=\, \Bigg\{ f\in L_1
\colon \; \norm{\Big(\sum_{\boldsymbol{s}\in\mathbb{N}_0^d} \, 2^{2r\abs{\boldsymbol{s}}_1} 
\abs{\delta_{\boldsymbol{s}} (f)}^2 \Big)^{1/2}}_p \le1 \Bigg\}
$$
{of functions with bounded mixed 
smoothness.}
 The definitions of these classes that include the limit cases $p=1$ and $p=\infty$ can be found in \cite[Chapter~3]{DTU18}. 
It is known that the embedding 
$F\hookrightarrow L_q$,  {$1\le q \le \infty$,} where $F\in\{\mathbf{H}^r_p,  \mathbf{W}^r_p \}$, 
is compact under the condition 
$r> (1/p-1/q)_+$. However, we need function evaluation to be a well-defined continuous functional and the sequence   
$(\alpha_k)_{k =1}^\infty$
defined by \eqref{eq:an} to be square summable,  {and}
therefore restrict ourselves
 {to}
considering 
$r>\max \{1/2, 1/p\}$.
In this case, we have a continuous embedding of the respective class $F$ into $C(\mathbb{T}^d)$ and we identify each $f\in F$ with its unique continuous representative.

Figure~\ref{fig1} gives an overview of our findings for these classes.
{In particular, we contribute to some open problems from \cite{DTU18}, see Open Problem~5.3 as well as Figures 5.2, 5.3.
Here we removed question marks for the regions with $1\leq q\leq 2 \leq p< \infty$, where our least squares approach gives sharp bounds
for both of the classes
$\mathbf{H}^r_p$ and $\mathbf{W}^r_p$
}
For instance, we obtain that there is an unweighted least-squares algorithm that gives the optimal rate of convergence for $L_q$-approxi\-mation on the classes $\mathbf{W}^r_2$, universally for all $1\le q \le \infty$.

 {Let us formulate a result for $\mathbf{H}^r_p$.}
Here, 
$Q_\ell$ and $Q_\ell'$ denote 
the sets of trigonometric polynomials with frequencies from 
the respective
hyperbolic crosses, 
i.e.,
we put 
\begin{equation}\label{eq:Ql}
    Q_\ell := \operatorname{span} \{ e^{i \bk \cdot \bx } \colon \ \bk \in \Omega_\ell   \}, \quad Q_\ell' := \operatorname{span} \{ e^{i \bk \cdot \bx } \colon \ \bk \in \Omega_\ell'   \}.
\end{equation}

\medskip

\begin{corollary}\label{estim_nik-besov-classes}
Let $1<p<\infty$ and $r>\max \{1/2, 1/p\}$. For all $\ell\in\mathbb{N}$, there are linear sampling algorithms $A_m \colon \mathbf{H}^r_p \to Q_\ell$ 
that use at most 
$m \asymp \dim Q_\ell \asymp 2^\ell \ell^{d-1}$ points
and satisfy 
 {for all
$1\leq q 
<
\infty$} that
 $$
	\sup_{f\in{\mathbf{H}^r_p}}\, \Vert f - A_m f  \Vert_{q}
\,\lesssim\,  m^{-(r-t)}
(\log m)^{(d-1)\left(r-t+1/2\right)},
$$
where $t:=(1/p-1/2)_+ +(1/2-1/q)_+$.
{In the cases}
\begin{itemize}
    \item $1<p <\infty$ and $q=2$, or
    \item $1\leq q \leq2\leq p < \infty$,
\end{itemize}
the bound is sharp 
{($\lesssim$ can be replaced with $\asymp$) 
and asymptotically equivalent to the approximation numbers $a_m(\mathbf{H}^r_p, L_q)$.}

For 
uniform approximation, we have $t=\max\{1/2,1/p\}$ and
\[
 \sup_{f\in{\mathbf{H}^r_p}}\Vert f - A_m f  \Vert_\infty 
 \,\lesssim\, 	m^{-(r- t)} 
(\log m)^{(d-1)\left(r-t+ 1\right)}.
\]
\end{corollary}

\begin{proof}[Proof of Corollary~\ref{estim_nik-besov-classes}]
All of the conditions of Corollary~\ref{coro:general} are satisfied. Indeed, we consider the nested sequence of subspaces $Q_\ell$, $\ell \in \mathbb{N}$, 
see~\eqref{eq:Ql}. Then, the projection $P_{Q_\ell}$ of $f \in \mathbf{H}^r_p$ takes the form
$$
P_{Q_\ell} f = \sum_{\bk \in \Omega_\ell} f_\bk  e^{i \bk \cdot \bx}.
$$   
The quantity 
$$
\sup_{f \in \mathbf{H}^r_p} \Vert f - P_{Q_\ell} f  \Vert_2 
$$
is nothing else than the best approximation $\mathcal{E}_{Q_\ell} (\mathbf{H}^r_p)_2$ of functions from $\mathbf{H}^r_p$ by the step hyperbolic cross Fourier sums $S_{Q_\ell}$. We know that
{it is equal to} 
the best approximation  by the step hyperbolic cross polynomials. 
Therefore, we can use the estimate
$$
{\mathcal{E}_{Q_\ell} (\mathbf{H}^r_p)_2}
\asymp 2^{-\ell (r-(\frac{1}{p}-\frac{1}{2})_+)}  {\ell}^{\frac{d-1}{2}}\,, \quad 1<p<\infty,
$$
 see \cite[Thm. 4.2.6]{DTU18} and the references therein.
 
Since $\dim{Q_\ell}\asymp 2^\ell \ell^{d-1}$, we put  $\alpha_{2^\ell \ell^{d-1}} := 2^{-\ell (r-(1/p-1/2)_+)} \ell^{(d-1)/2}$, and hence
$$
\alpha_m \,\asymp\,
m^{- (r-(\frac{1}{p}-\frac{1}{2})_+)}
(\log m)^{(d-1)\left(r-(\frac{1}{p}-\frac{1}{2})_+ + \frac{1}{2}\right)}.
$$

Using the Nikol'skii inequality (see \cite[Thm.~2.4.9]{DTU18} with $Q_{\ell}\subset 
{Q'_\ell}$), we get that 
$$
{\Lambda^{(q)}_{Q_{\ell}}}:= 
\sup_{f\in Q_\ell, f \neq 0}\frac{\|f\|_q}{\|f\|_2} \lesssim 2^{\ell (\frac12-\frac1q)}, \quad 2\leq q < \infty.
$$
Therefore, in view of $\dim Q_\ell\asymp2^{\ell} \ell^{d-1}$, it 
 {holds that}
\begin{equation}\label{Chr_f_Lq_trigonom}
\Lambda^{(q)}_n \lesssim n^{(\frac12-\frac1q)_+} (\log n)^{(d-1)(\frac1q-\frac12)_-}, \quad 1\leq q < \infty,
\end{equation}
where $a_- = \min\{a, 0\}$.

Then, Corollary~\ref{coro:general} with
$$\alpha = r-(1/p-1/2)_+ 
> \max\{1/2, (1/2-1/q)_+ \}= 1/2, $$
which holds for 
$r>\max \{1/2, 1/p\}$, 
as well as 
$$\gamma =(d-1)(r-(1/p-1/2)_+ + 1/2), $$ 
$\beta = (1/2-1/q)_+$ and $\delta =(d-1)(1/q-1/2)_-$
yields that 
there exist linear sampling algorithms $A_{m} \colon \mathbf{H}^r_p \to Q_\ell$,
with	$m \asymp \dim Q_\ell \asymp 2^\ell \ell^{d-1}$ 
 {points,}
satisfying
$$
\sup_{f \in \mathbf{H}^r_p} \Vert f - A_{m} f  \Vert_q	 \lesssim 
m^{-(r-(\frac{1}{p}-\frac{1}{2})_+) + (\frac12-\frac{1}{q})_+}
(\log m)^{(d-1)\left(r-(\frac{1}{p}-\frac{1}{2})_+ + \frac{1}{2} + (\frac{1}{q}-\frac{1}{2})_-\right)},
$$
where 
$1\leq q < \infty$.
 {Note that}
the condition $r>\max \{1/2, 1/p\}$ ensures the compact embedding of $\mathbf{H}^r_p$ in $L_q$ for all $p$, $q$ {(it is stronger than the condition $r>(1/p-1/q)_+$)}, so the problem is well defined.

{Equation~  {(\ref{Chr_f_Lq_trigonom})} does not extend to the case $q=\infty$.
Here, we only have}
\begin{equation}\label{Chr_f_Linfty_trigono}
\Lambda^{(\infty)}_n \,\lesssim\, n^{\frac12},
\end{equation}
(see \cite[Thm.~2.4.8]{DTU18}),
and hence {$\delta=0$, so that} 
$$
\sup_{f \in \mathbf{H}^r_p} \Vert f - A_{m} f  \Vert_\infty	 \lesssim 
m^{-(r-(\frac{1}{p}-\frac{1}{2})_+) + \frac12}
(\log m)^{(d-1)\left(r-(\frac{1}{p}-\frac{1}{2})_+ + \frac{1}{2} \right)}.
$$

 {In order to discuss sharpness of the obtained upper bounds we take into account the relation 
$$
a_m(\mathbf{H}^r_p, L_q) \leq g_m^{\lin} (\mathbf{H}^r_p, L_q) \leq 
\sup_{f \in \mathbf{H}^r_p} \Vert f - A_{m} f  \Vert_q\,.	
$$
In the cases $q=2$, $1<p <\infty$ and $1\leq q \leq2\leq p < \infty$ one has (see, e.g., \cite[Thm. 4.5.2]{DTU18}) the matching lower bound for approximation numbers $a_m(\mathbf{H}^r_p, L_q)$ that yields sharp orders for the least squares recovery.  
}
\end{proof}

\subsubsection*{Left Figure~\ref{fig1}}
{
\medskip
{\bf Green area} ($1\leq q \leq 2\leq p
<\infty$).  We obtained the right orders. In this parameter region,
the exact orders of recovery
for Smolyak-type algorithms \cite[Thm.~5.1]{Dung_Ullrich2015} do not match the bounds for the linear widths (even in the case $p=q=2$). 

{\bf Magenta line} ($p=2<q<\infty$).  The orders of sampling numbers and linear widths coincide and are equal to 
$$m^{-(r-(1/2-1/q))} (\log m)^{(d-1)(r-1/p+2/q)}$$ (see \cite[Cor.\,~7.6]{Byrenheid_Ullrich2017}). 
In this parameter region, our approach leads to worse bounds in a logarithmic scale, but we make a conjecture that the least squares algorithm should also give the sharp order (as it holds for the $\mathbf{W}^r_p$ classes, see Corollary~\ref{estim_Sobolev_classes} below). 

{\bf Blue area} ($2\leq p<q < \infty$ and $1<p<q\leq 2$). The obtained upper bounds for the least squares recovery are worse in the main rate than those (optimal in order) for the Smolyak algorithm.
 {Let us also recall that} the first sharp order for the function recovery using the Smolyak algorithm on the classes $\mathbf{H}^r_p$ in $L_q$ for $1<p<q\leq 2$ was obtained
in \cite{DD91}. 

{\bf White area.} For the triangular regions, where either $1<q\leq p <2$ or $2<q\leq p < \infty$,  our bounds for the least squares recovery are worse in the main rate than the best known (not optimal in order) for Smolyak algorithms. 

For the lower right region ($1<p<2<q<\infty$),  the upper bounds for the least squares recovery are worse only in the logarithmic rate than the respective best known bounds for the Smolyak algorithm.
Using the ``fooling argument'' \cite[Cor.\,8.6]{Byrenheid_Ullrich2017}, one can show that in this region the
main rate $m^{-(r-(1/p-1/q))}$ for the sampling numbers $g_n^{\lin}(\mathbf{H}^r_p, L_q)$ is optimal. 
However, in this range the linear widths {decay faster} 

than the linear sampling numbers.

{\bf Case $\boldsymbol{q=\infty}$.}
    Let us compare the estimate of Corollary~\ref{estim_nik-besov-classes} with the
bound $m^{-(r-1/p)}$ $(\log m)^{(d-1)(r-1/p+1)}$ for  the 
{$L_\infty$}-recovery 
of functions from $\mathbf{H}^r_p$, $1<p<\infty$, $r>1/p$, on the Smolyak grids  (see \cite{DD91}, \cite[Thm.~5.3.3 (ii)]{DTU18}  and the references therein). 
We 
 {see that}
for $1<p\leq 2$ the 
corresponding estimates coincide. 
 {Note that}
it is not known whether these bounds are sharp. However, for $2<p<\infty$, Smolyak's algorithm gives bounds that are better even in the main rate.

\medskip

For the Sobolev classes $\mathbf{W}^r_p$
we obtain the following.

\begin{corollary}\label{estim_Sobolev_classes}
Let $1< p <\infty$ and $r>\max \{1/2, 1/p\}$. For all $\ell\in\mathbb{N}$, there are linear sampling algorithms $A_m \colon \mathbf{W}^r_p \to Q_\ell$ 
that use at most $m \asymp \dim Q_\ell \asymp 2^\ell \ell^{d-1}$ points and
	satisfy
for all
$1\leq q < \infty$ that
$$
\sup_{f\in{\mathbf{W}^r_p}}\Vert f - A_m f  \Vert_q
	\,\lesssim\, m^{ -(  r-t)} 
(\log m)^{(d-1)(r-t) }\,,
	$$
where $t=(1/p-1/2)_+ +(1/2-1/q)_+$.

For uniform approximation, we have $t=\max\{1/2,1/p\}$ and
\[
 \sup_{f\in{\mathbf{W}^r_p}}\Vert f - A_m f  \Vert_\infty 
 \,\lesssim\, m^{ -( r-t)} (\log m)^{(d-1)(r-t+1/2)}.
\]
{In the cases}
\begin{itemize}
    \item $1<p <\infty$ and $q=2$, or
    \item $1\leq q \leq2\leq p < \infty$, or
    \item {$p=2\leq q \le \infty$}.
\end{itemize}
those bounds are sharp {($\lesssim$ can be replaced with $\asymp$) } and 
{asymptotically equivalent to the approximation numbers $a_m(\mathbf{W}^r_p, L_q)$.}
\end{corollary}

\begin{proof}[Proof of Corollary~\ref{estim_Sobolev_classes}]

Arguing similarly 
as in the proof of
Corollary \ref{estim_nik-besov-classes} for the Nikol'skii-Besov classes, we will use the estimate for the corresponding best approximation by the step hyperbolic cross polynomials 
$$
{\mathcal{E}_{Q_
\ell}
}(\mathbf{W}^r_p)_2 \asymp 2^{-
{\ell}
(r-(\frac{1}{p}-\frac{1}{2})_+)} \,, \quad r>
{(1/p-1/2)_+,}
\ 1< p <\infty,
$$
see \cite[Thm. 4.2.4]{DTU18} and the references therein.
 
Hence, we put $\alpha_{2^{\ell} {\ell}^{d-1}} := 2^{-{\ell} (r-(1/p-1/2)_+)}$,
and therefore
$$\alpha_m \,\asymp\, m^{ -( r-(\frac{1}{p}-\frac{1}{2})_+)} (\log m)^{(d-1)(r-(\frac{1}{p}-\frac{1}{2})_+)}.$$

{Corollary~\ref{coro:general}}
with 
$\alpha  = r-(1/p-1/2)_+$, 
$\gamma =(d-1)(r-(1/p-1/2)_+)$, 
$\beta = (1/2-1/q)_+$ and $\delta =(d-1)(1/q-1/2)_-$\,, 
$1\leq q <\infty$ (see (\ref{Chr_f_Lq_trigonom})), and, respectively, 
$\beta = 1/2$, $\delta =0$ in the case $q=\infty$ (see (\ref{Chr_f_Linfty_trigono}))
 yields the 
  {upper bounds in}
  Corollary~\ref{estim_Sobolev_classes}.

 {Sharpness of the obtained estimates for the indicated  relations between the parameters $p$ and $q$ follows from the matching lower bounds for the approximation numbers 
$a_m(\mathbf{W}^r_p, L_q)$ (see, e.g., \cite[Thm. 4.5.1]{DTU18}).
}
\end{proof}

\subsubsection*{Right Figure~\ref{fig1}}
{\medskip
{\bf Green area} ($1\leq q \leq 2\leq p
<\infty$).  We obtained the asymptotically sharp orders. In this parameter region,
the so far known bounds for Smolyak-type algorithms do not match the bounds for the linear widths (even in the case $p=q=2$, see also~\cite{DKU}). 

{\bf Magenta lines} ($p=2<q<\infty$ and $1 < p < 2 = q$).  The obtained orders of the least squares recovery coincide with
the orders of linear widths. The same was observed also for the Smolyak algorithm. In \cite{MPU25} the discussion is extended to non-linear algorithms, where in this case non-linear sampling outperforms linear sampling in the horizontal magenta line, i.e., $1<p<2=q$.

{\bf Blue area} ($2\leq p<q < \infty$ and $1<p<q\leq 2$). The upper bounds for the least squares recovery are worse in the main rate than those (optimal in order) for the Smolyak algorithm.

{\bf White area.} For the triangular regions, where either $1<q\leq p <2$ or $2<q\leq p < \infty$,  our bounds for the least squares recovery are worse in the main rate than the sharp orders of linear widths. 

For the lower right region ($1<p<2<q<\infty$),  our approach gives the same upper bound  as the linear Smolyak-type interpolation \cite[Cor.\,7.1]{Byrenheid_Ullrich2017} 
(this is not the case for the $\mathbf{H}^r_p$ classes).  
Similarly as for the $\mathbf{H}^r_p$ classes, using the ``fooling argument'' \cite[Cor.\,8.6]{Byrenheid_Ullrich2017}, one can show that if $1<p<2< q<\infty$, the
main rate $m^{-(r-(1/p-1/q))}$ for the sampling numbers $g_n^{\lin}(\mathbf{W}^r_p, L_q)$ is optimal. 
However, in this range the linear widths 
{decay faster} 
than the linear sampling numbers.

{\bf Case $\boldsymbol{q=\infty}$.}
 For the $L_\infty$-recovery of
   $f\in \mathbf{W}^r_p$, $1<p<\infty$, $r>1/p$ on the Smolyak grids, the upper bound
    $m^{-(r-1/p)}$ $(\log m)^{(d-1)(r-2/p+1)}$ is known \cite[Cor.\ 7.8]{Byrenheid_Ullrich2017}.
Hence, the deterministic Smolyak algorithm outperforms least-squares 
in both of the cases $p<2$ and $p>2$. For $p=2$ the respective bounds coincide and are asymptotically sharp \cite{Tem93}. 
   
}

\medskip

\subsubsection{Wiener-type and Korobov spaces}

For $r>0$, let
$$
\mathcal{A}^r_{\mix} := \Big\{ f\in L_1(\mathbb{T}^d) 
\colon \; \sum_{\bk \in \mathbb{Z}^d} \abs{f_{\bk}} \cdot \prod_{j=1}^{d}\max \{1,  \abs{k_j}\}^{r}  \le1 \Big\}
$$
be a mixed Wiener class,
where $f_{\bk}$ are the Fourier coefficients of $f$ w.r.t.\ the trigonometric system.
We obtain 
\begin{corollary}\label{estim_Wiener_linear}
	For $r>1/2$, there are linear sampling algorithms $A_m \colon \mathcal{A}^r_{\mix} \to Q_\ell'$ with 
	$Q_\ell'$ from~\eqref{eq:Ql} and $m \asymp \dim Q_\ell' \asymp 2^\ell \ell^{d-1}$ 
     {points,}
    satisfying 
	$$
	\sup_{ f \in {\mathcal{A}^r_{\mix}}} \Vert f - A_m f  \Vert_q
	\lesssim 
	m^{-(r - (\frac12-\frac{1}{q})_+)} {(\log m)^{(d-1)(r- (\frac12-\frac{1}{q})_+ 
	)}}
	$$
	for every
	{$1\leq q < \infty$.
In particular, 
	it 
     {holds that}
$$
	\sup_{f \in {\mathcal{A}^r_{\mix}}} \Vert f - A_m f  \Vert_2
 	\asymp 
	m^{-r} (\log m)^{(d-1)r}.
	$$
For the $L_\infty$-bounds,	we get
$$
	\sup_{f \in {\mathcal{A}^r_{\mix}}} \Vert f - A_m f  \Vert_\infty 
 	\lesssim 
	m^{-(r - \frac{1}{2})} (\log m)^{(d-1)r}.
$$}
\end{corollary}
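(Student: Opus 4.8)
The plan is to realize $\mathcal{A}^r_{\mix}$ as a class $F$ to which Theorem~\ref{thm:intro-general} applies (its underlying algorithm being the one from Theorem~\ref{thm:general}), with the approximation spaces $V_n$ built from the trigonometric system ordered by weight. Writing $w(\bk):=\prod_{j=1}^d\max\{1,|k_j|\}^r$, I would enumerate the frequencies $\bk\in\Z^d$ so that $w(\bk)$ is non-decreasing (ties broken consistently) and let $V_n$ be the span of the first $n$ monomials $e^{i\bk\cdot\bx}$. Since the step hyperbolic cross $\Omega_\ell$ from \eqref{eq:Ql} is exactly $\{\bk:\prod_j\max\{1,|k_j|\}<2^\ell\}$, this enumeration satisfies $V_{|\Omega_\ell|}=Q_\ell$, so the algorithm from Theorem~\ref{thm:general} with $m=\dim Q_\ell$ does map into $Q_\ell$ and uses $\asymp\dim Q_\ell\asymp 2^\ell\ell^{d-1}$ points, as required. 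The trigonometric system is $1$-bounded, so $\sum_{k=1}^n|b_k(\bx)|^2\equiv n$ and hence $\Lambda_n=\Lambda(V_n)=\sqrt n$; this fixes $\beta=1/2$.

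The decisive step is to evaluate $\alpha_n=\sup_{f\in\mathcal{A}^r_{\mix}}\|f-P_{V_n}f\|_2$. Parametrizing $f$ via $a_\bk:=|f_\bk|\,w(\bk)\ge 0$ with $\sum_\bk a_\bk\le 1$, the squared error becomes $\sum_{\bk\notin S_n}a_\bk^2/w(\bk)^2$, where $S_n$ indexes $V_n$. This is a convex functional on the simplex, so its maximum is attained at a vertex, i.e.\ by putting all mass on the single frequency of smallest weight outside $S_n$; hence $\alpha_n=1/w_{(n+1)}$, where $w_{(n+1)}$ is the $(n+1)$-st smallest value of $w$. I would then use the hyperbolic lattice-point count $\#\{\bk:\prod_j\max\{1,|k_j|\}\le N\}\asymp N(\log N)^{d-1}$, which yields $w_{(n)}\asymp\big(n/(\log n)^{d-1}\big)^r$ and therefore
\[
 \alpha_n \;\asymp\; n^{-r}(\log n)^{r(d-1)}.
\]
Thus the second hypothesis of Theorem~\ref{thm:intro-general} holds with $\alpha=r$ and $\gamma=r(d-1)$, and the requirement $\alpha>\beta$ is precisely $r>1/2$.

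Feeding these parameters into Theorem~\ref{thm:intro-general} gives, for every $1\le q\le\infty$,
\[
 g^{\lin}_{cm}(\mathcal{A}^r_{\mix},L_q) \;\lesssim\; m^{-(r-(\tfrac12-\tfrac1q)_+)}(\log m)^{(d-1)r},
\]
where I used $\tfrac12(1-2/q)_+=(\tfrac12-\tfrac1q)_+$. Specializing to $q=\infty$ and $q=2$ reproduces the two displayed bounds. For the sharp assertion at $q=2$ I would add the matching lower bound via $g^{\lin}_n\ge a_n$: after identifying Fourier coefficients, the embedding $\mathcal{A}^r_{\mix}\hookrightarrow L_2$ is the diagonal map of the $\ell_1$-ball into $\ell_2$ with diagonal $(1/w_{(k)})$, whose approximation numbers equal the $(n+1)$-st entry $1/w_{(n+1)}=\alpha_n$; hence $a_n(\mathcal{A}^r_{\mix},L_2)=\alpha_n\asymp n^{-r}(\log n)^{r(d-1)}$, matching the upper bound.

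The main obstacle is the evaluation of $\alpha_n$. The extremal problem is routine once it is phrased as maximizing a convex functional over the simplex, but the conversion of $w_{(n)}$ into an explicit rate rests on the hyperbolic lattice-point asymptotics and on controlling the behaviour near the boundary of the cross (the off-by-constant effects of tie-breaking and of the gap between $\dim Q_\ell$ and intermediate dimensions). Everything else is a direct substitution into Theorem~\ref{thm:intro-general}, the interpolation $\|\cdot\|_q\le\|\cdot\|_2^{2/q}\|\cdot\|_\infty^{1-2/q}$ for $2<q<\infty$ and the embedding $\|\cdot\|_q\le\|\cdot\|_2$ for $q<2$ already being contained in that theorem.
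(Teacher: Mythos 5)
Your upper-bound argument is, in substance, the paper's own proof: the paper also works with the hyperbolic-cross spaces $Q_\ell$ (i.e.\ the trigonometric monomials enumerated by non-decreasing weight), uses $\Lambda_n=\sqrt n$, establishes $\alpha_n\lesssim n^{-r}(\log n)^{(d-1)r}$, and plugs $\alpha=r$, $\beta=1/2$, $\gamma=(d-1)r$ into Theorem~\ref{thm:intro-general}. Your evaluation of $\alpha_n$ (maximum of a convex functional over the simplex, attained at a vertex, giving exactly $1/w_{(n+1)}$) is a slightly sharper variant of the paper's estimate, which instead pulls out $\sup_{\bk\notin\Omega_\ell}w(\bk)^{-1}\le 2^{-\ell r}$ and uses $\|\cdot\|_{\ell_2}\le\|\cdot\|_{\ell_1}$; both give the same order, so this part is fine.

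The genuine gap is in your lower bound for the $\asymp$ at $q=2$. It is \emph{not} true that the approximation numbers of the diagonal map $D_\sigma\colon\ell_1\to\ell_2$ with diagonal $\sigma_k=1/w_{(k)}$ equal $\sigma_{n+1}$. Coordinate truncation only gives the upper bound $a_n\le\sigma_{n+1}$; the reverse inequality fails for operators out of $\ell_1$. Concretely, for the identity $\ell_1^N\to\ell_2^N$ (constant diagonal $\sigma\equiv 1$) one has $a_n=\sqrt{1-n/N}<1=\sigma_{n+1}$ for all $1\le n<N$; already for $N=2$, $n=1$, the rank-one operator $Tx=\frac{x_1+x_2}{2}(1,1)$ achieves worst-case error $1/\sqrt2$ on the $\ell_1$-ball. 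The identity $a_n(D_\sigma)=\sigma_{n+1}$ is a Hilbert-space ($\ell_2\to\ell_2$) phenomenon; for $\ell_1\to\ell_2$ the optimal rank-$n$ operators are not coordinate projections, which is precisely the regime of Kashin-type effects. Your asymptotic conclusion $a_n(\mathcal{A}^r_{\mix},L_2)\asymp n^{-r}(\log n)^{(d-1)r}$ is nevertheless correct and can be repaired: the class contains the scaled $\ell_1$-ball $\sigma_{2n}B_{\ell_1}^{2n}$ supported on the $2n$ smallest-weight frequencies (since $\|c\|_1\le\sigma_{2n}$ implies $\sum_k|c_k|w_{(k)}\le1$), and the exact formula $a_n(B_{\ell_1}^{N},\ell_2^N)=\sqrt{1-n/N}$ then yields $a_n(\mathcal{A}^r_{\mix},L_2)\ge\sigma_{2n}/\sqrt2\asymp\sigma_n$, which suffices because the weights are regularly varying. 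Alternatively, one can do what the paper does: its appendix proves only the upper bounds, and the sharpness at $q=2$ is obtained by citing the known lower bounds for widths of weighted Wiener embeddings in \cite{NNS}.
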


\begin{proof}[Proof of Corollary~\ref{estim_Wiener_linear}]
For $f \in \mathcal{A}^r_{\mix}$, we have that 
\begin{align*}
&\Vert f  - P_{Q_\ell'} f  \Vert_2  
=	\Vert  \sum_{\bk\in \mathbb{Z}^d \setminus \Omega_m'} f_{\bk} e^{i \bk \cdot \bx }  \Vert_2
= 	\left( \sum_{\bk\in \mathbb{Z}^d \setminus \Omega_m'} \abs{f_{\bk}}^2 \right)^{1/2}
	\\
&	=  \left( \sum_{\bk\in \mathbb{Z}^d \setminus \Omega_m'} \abs{f_{\bk}}^2  \prod_{j=1}^{d} (1+ \abs{k_j})^{2r} \prod_{j=1}^{d} (1+ \abs{k_j})^{-2r} \right)^{1/2}
	\\
&	\leq \sup_{\bk \in \mathbb{Z}^d \setminus \Omega_m'} \prod_{j=1}^{d} (1+ \abs{k_j})^{-r}
	\left( \sum_{\bk\in \mathbb{Z}^d \setminus \Omega_m'} \abs{f_{\bk}}^2  \prod_{j=1}^{d} (1+ \abs{k_j})^{2r} \right)^{1/2}
	\\
&	\leq  2^{-\ell r} \left( \sum_{\bk\in \mathbb{Z}^d \setminus \Omega_m'} \abs{f_{\bk}}  \prod_{j=1}^{d} (1+ \abs{k_j})^{r} \right)^{1/2} 
 \leq 2^{-\ell r} .
\end{align*}

Hence, since  $\dim{Q_\ell'} \asymp 2^\ell \ell^{d-1}$, we put $m \asymp 2^\ell \ell^{d-1} $ and get 
$$
\sup_{f \in \mathcal{A}^r_{\mix}} \Vert f - P_{V_m} f  \Vert_2\lesssim  m^{-r} (\log m)^{(d-1)r}.
$$
Taking into account the bound  (\ref{Chr_f_Lq_trigonom}) for the Christoffel function, 
from 
{Corollary~\ref{coro:general}}
with
$\alpha = r >1/2$, $\gamma =(d-1)r$, 
$\beta = (1/2-1/q)_+$ and $\delta =(d-1)(1/q-1/2)_-$
we get that
there are linear sampling algorithms $A_m \colon \mathcal{A}^r_{\mix}\to Q_\ell'$,
 $r>1/2$, with $m \asymp \dim Q_\ell' \asymp 2^\ell \ell^{d-1}$
  {points,}
satisfying
$$
\sup_{f \in \mathcal{A}^r_{\mix}} \Vert f - A_m f  \Vert_q \lesssim 
m^{-(r - (\frac12-\frac{1}{q})_+)} 
(\log m)^{(d-1)(r+ (\frac{1}{q}-\frac{1}{2})_-)},
$$
where 
$1\leq q <\infty$.

If $q=\infty$, (\ref{Chr_f_Linfty_trigono}) yields the bound
$$
\sup_{f \in \mathcal{A}^r_{\mix}} \Vert f - A_m f  \Vert_\infty \lesssim 
m^{-(r - \frac12)} 
(\log m)^{(d-1)r}.
$$%
\end{proof}

\medskip

\begin{remark}
From \cite[Thm. 5.1 (Remark\ 6.4)]{Kol_Lomako_Tikhonov23} 
(putting $\alpha =r$, 
 $p=1$, $\sigma_{1,q'}=0$
and
$m\asymp 2^n n^{d-1}$), we get the upper bound $m^{-r} (\log m)^{(d-1)r}$ for the approximation of functions from 
 the weighted
Wiener spaces by sparse grid methods 
 in $L_q$, $2\leq q \leq \infty$.
\\
This bound in the case $q=2$ coincides with those obtained in our Corollary~\ref{estim_Wiener_linear}. 
 {The estimate}
is asymptotically sharp as the recent results in \cite{NNS} on the Kolmogorov numbers for this embedding show.
However, Smolyak-type algorithms give better main rate estimates  {for
$2<q\leq \infty$.}
\end{remark}

Let us now consider the Korobov classes
$$
\mathbf{E}^r_d  := \left\{ 
f\in L_1(\mathbb{T}^d) \colon  \sup_{\bk \in \mathbb{Z}^d } \left( \abs{f_{\bk}} \cdot \prod_{j=1}^{d}\max \{1,  \abs{k_j}\}^{r} \right) \leq 1  \right\}.
$$
We obtain the following.

\begin{corollary}\label{estim_Korobov_linear_L_2}
	For $r>1$, there are linear sampling algorithms $A_m \colon \mathbf{E}^r_d \to Q_\ell'$ with $Q_\ell'$ from~\eqref{eq:Ql} and $m \asymp \dim Q_\ell' \asymp 2^\ell \ell^{d-1}$
     {points}
    satisfying
	$$
\sup_{f\in \mathbf{E}^r_d} \Vert f - A_m f  \Vert_q \lesssim 
m^{-(r -  \frac{1}{2} - (\frac{1}{2}-\frac{1}{q})_+)}
(\log m)^{(d-1)(r - (\frac{1}{2}-\frac{1}{q})_+)},
$$
for every 
{$1\leq q < \infty$.
In particular, 	it 
 {holds that}
$$
\sup_{f\in \mathbf{E}^r_d}\,  \Vert f - A_m f  \Vert_2 
\,  \asymp\, m^{-(r - \frac{1}{2})} (\log m)^{(d-1)r} \, .
	$$
For the $L_\infty$-bounds,	we get
$$
\sup_{f\in \mathbf{E}^r_d}\,  \Vert f - A_m f  \Vert_\infty 
	\,  \lesssim\, m^{-(r -1)} (\log m)^{(d-1)r}.
$$
}
\end{corollary}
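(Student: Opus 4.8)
The plan is to apply Theorem~\ref{thm:general} (or its $L_q$ version via interpolation as in the proof of Theorem~\ref{thm:intro-general}) to the Korobov class $\mathbf{E}^r_d$ with the approximation spaces $V_n$ chosen as the hyperbolic cross polynomial spaces $Q_\ell$ from~\eqref{eq:Ql}. The two inputs I need are the Christoffel function $\Lambda_m$ of $Q_\ell$ and the sequence $\alpha_k$ of best $L_2$-approximation errors for $\mathbf{E}^r_d$. Since the trigonometric system is $1$-bounded, the optimal bound $\Lambda_m \asymp \sqrt{m}$ holds for the hyperbolic cross spaces, so the Corollary to Theorem~\ref{thm:general} applies and reduces everything to controlling $\sum_{k>m} \alpha_k/\sqrt{k}$.

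First I would compute the $L_2$-approximation numbers $\alpha_m$ for $\mathbf{E}^r_d$. For $f\in\mathbf{E}^r_d$ the Fourier coefficients satisfy $\abs{f_\bk}\le \prod_{j}\max\{1,\abs{k_j}\}^{-r}$, and projecting onto $Q_\ell$ leaves the tail $\sum_{\bk\notin\Omega_\ell}\abs{f_\bk}^2$. A direct estimate of this tail over the complement of the step hyperbolic cross gives, after the usual dyadic bookkeeping in $\abs{\boldsymbol{s}}_1$, that best $L_2$-approximation on $Q_\ell$ is of order $2^{-\ell(r-1/2)}\ell^{(d-1)r}$ roughly; translating $\dim Q_\ell \asymp 2^\ell \ell^{d-1}$ into $m$ yields $\alpha_m \asymp m^{-(r-1/2)}(\log m)^{(d-1)r}$. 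This is precisely the claimed $L_2$-rate, and since $r>1$ forces $r-1/2>1/2$, the series $\sum_{k>m}\alpha_k/\sqrt{k}$ converges and is dominated by its first term, reproducing the $L_2$-bound up to constants.

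Next I would feed this into the lifting. For $L_\infty$, Theorem~\ref{thm:general} with $\Lambda_{4k}\asymp\sqrt{k}$ gives the two sums $\sum_{k>m}\alpha_k\Lambda_{4k}/k \asymp \sum_{k>m}\alpha_k/\sqrt{k}$ and $\frac{\Lambda_m}{\sqrt{m}}\sum_{k>m}\alpha_k/\sqrt{k} \asymp \sum_{k>m}\alpha_k/\sqrt{k}$, both of order $m^{-(r-1)}(\log m)^{(d-1)r}$ after summing; note the gain in the main rate is only $1/2$ here because the extra $\Lambda$-factor costs $\sqrt{m}$, giving exponent $r-1/2-1/2 = r-1$ as stated. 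For general $1\le q\le\infty$ I would use the interpolation inequality $\Vert f-A_mf\Vert_q \le \Vert f-A_mf\Vert_2^{2/q}\Vert f-A_mf\Vert_\infty^{1-2/q}$ for $q>2$ exactly as in the proof of Theorem~\ref{thm:intro-general}, which interpolates the exponent to $r-\frac12(1-2/q)_+$ and keeps the log-exponent at $(d-1)r$; for $q\le 2$ the $L_2$-bound already suffices.

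The main obstacle is the careful computation of the tail sum $\sum_{\bk\notin\Omega_\ell}\prod_j\max\{1,\abs{k_j}\}^{-2r}$ and the correct power of the logarithm: one must count lattice points in dyadic blocks $\rho(\boldsymbol{s})$ with $\abs{\boldsymbol{s}}_1$ large and sum a geometric-times-polynomial series in the multi-index $\boldsymbol{s}$, where the $\ell^{d-1}$ factor arises from the number of multi-indices with fixed $\abs{\boldsymbol{s}}_1$. This is routine but must be done with the sharp constants to match the claimed exponents; I expect the bulk of the work (deferred to Appendix~\ref{App}) to lie precisely in verifying $\alpha_m \asymp m^{-(r-1/2)}(\log m)^{(d-1)r}$, after which the lifting machinery of Theorem~\ref{thm:general} and the interpolation step are immediate.
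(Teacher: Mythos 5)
Your approach for the upper bounds is essentially the paper's own: estimate the tail $\sum_{\bk\notin\Omega_\ell}\prod_{j}\max\{1,\abs{k_j}\}^{-2r}$ by dyadic counting over the blocks $\rho(\boldsymbol{s})$, translate $\dim Q_\ell \asymp 2^\ell\ell^{d-1}$ into $m$, and feed the resulting $\alpha_m \lesssim m^{-(r-1/2)}(\log m)^{(d-1)r}$ into Theorem~\ref{thm:intro-general} (equivalently Theorem~\ref{thm:general} plus the interpolation step) with $\beta=1/2$, $\alpha=r-1/2>1/2$, $\gamma=(d-1)r$; this is exactly what the paper does. Two slips in your write-up should be fixed, though neither is fatal: in terms of $\ell$ the best approximation is $\asymp 2^{-(r-1/2)\ell}\,\ell^{(d-1)/2}$, not $2^{-(r-1/2)\ell}\,\ell^{(d-1)r}$ --- the power $(\log m)^{(d-1)r}$ appears only after substituting $2^{-\ell}\asymp \ell^{d-1}/m$, and translating your $\ell$-form literally would produce a wrong, larger log-power; also the interpolated exponent for $q>2$ is $r-\tfrac12-\tfrac12(1-2/q)_+$, not $r-\tfrac12(1-2/q)_+$ (your $q=\infty$ computation $r-\tfrac12-\tfrac12=r-1$ is correct, so this is only a notational lapse).

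The genuine gap is the two-sided claim $\sup_{f\in\mathbf{E}^r_d}\Vert f-A_mf\Vert_2 \asymp m^{-(r-1/2)}(\log m)^{(d-1)r}$: you assert ``$\asymp$'' for $\alpha_m$ but never prove any lower bound, and your proposal as written only yields the upper estimates. The paper obtains the lower bound from the embedding $\mathbf{H}^r_1\subset \mathbf{E}^r_d$ together with the known Kolmogorov-width asymptotics of $\mathbf{H}^r_1$ in $L_2$, see \cite[Thm.~4.3.12]{DTU18}. Your route can be completed more cheaply and self-containedly: the extremal function $f^*$ with coefficients $f^*_\bk=\prod_{j}\max\{1,\abs{k_j}\}^{-r}$ belongs to $\mathbf{E}^r_d$ (it lies in $L_2\subset L_1$ since $r>1/2$), so the worst-case best $L_2$-approximation from $Q_\ell$ over the class equals the square root of the tail sum, which is $\gtrsim 2^{-(r-1/2)\ell}\ell^{(d-1)/2}$; and since $A_m$ maps into $Q_\ell$, its worst-case $L_2$ error is bounded below by this best-approximation error. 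Writing down this one extra argument closes the gap; without it, the ``$\asymp$'' in the $L_2$ statement is unproven.
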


Note that we obtain the same bound (but with a different method) for uniform approximation of functions from the Korobov classes as obtained by Smolyak in his famous 1963 paper where he introduced sparse grid methods, also known as Smolyak's algorithm, see \cite[p.~1044]{Sm}.

\begin{proof}[Proof of Corollary~\ref{estim_Korobov_linear_L_2}]
Using similar arguments as above, we get
\begin{align*}
	\sup_{f \in \mathbf{E}^r_d} & \Vert f - P_{Q_\ell'} f  \Vert_2
	\\ 
&	=
		\sup_{f \in \mathbf{E}^r_d} \left(
		\sum_{\bk\in \mathbb{Z}^d \setminus \Omega_\ell'}
		\abs{f_{\bk}}^2 \cdot \prod_{j=1}^{d}\max \{1,  \abs{k_j}\}^{2r} 
		\prod_{j=1}^{d}\max \{1,  \abs{k_j}\}^{-2r} 
		\right)^{1/2}
		\\
& 
\leq 
	\left( \sum_{\bk\in \mathbb{Z}^d \setminus \Omega_\ell'}
	\prod_{j=1}^{d}\max \{1,  \abs{k_j}\}^{-2r}
	\right)^{1/2}
\\
& =\left( \sum_{\abs{\boldsymbol{s}}_1 > \ell}  \sum_{\bk\in \rho(\boldsymbol{s})} 
	\prod_{j=1}^{d}\max \{1,  \abs{k_j}\}^{-2r}
	\right)^{1/2}
	\leq \left( \sum_{\abs{\boldsymbol{s}}_1 > \ell}  2^{-2r \abs{\boldsymbol{s}}_1} \sum_{\bk\in \rho(\boldsymbol{s})} 1
	\right)^{1/2} 
			\\
& \lesssim \left( \sum_{\abs{\boldsymbol{s}}_1 \geq \ell}  2^{-2r \abs{\boldsymbol{s}}_1}  \cdot 2^{\abs{\boldsymbol{s}}_1}
	\right)^{1/2} 
	\lesssim  \left( 2^{- (2r-1)\ell}  \ell^{d-1}\right)^{1/2} 
= 2^{-(r-\frac{1}{2})\ell} \ell^{\frac{d-1}{2}}.
\end{align*}

Hence, from 
{Corollary~\ref{coro:general}}
with $\alpha_m = m^{-(r-1/2)} (\log m)^{(d-1)r}$, $\alpha = r >1/2$, $\gamma =(d-1)r$,
$\beta = (1/2-1/q)_+$ and $\delta =(d-1)(1/q-1/2)_-$, we get that
there are linear sampling algorithms $A_m \colon \mathbf{E}^r_d  \to Q_\ell'$,
 $r>1/2$, with $m \asymp \dim Q_\ell' \asymp 2^\ell \ell^{d-1}$
  {points,}
satisfying
$$
\sup_{f \in \mathbf{E}^r_d} \Vert f - A_m f  \Vert_q \lesssim 
m^{-(r -  \frac{1}{2} - (\frac12-\frac{1}{q})_+)} (\log m)^{(d-1)(r +(1/q-1/2)_- )},
$$
where $1\leq q 
<\infty$.

The corresponding lower bound for the $L_2$-approximation follows from the embedding $\mathbf{H}^r_1 \subset \mathbf{E}^r_d $ and the estimate for the Kolmogorov widths of the classes $\mathbf{H}^r_1$, $r>1$, in $L_2$, see \cite[Thm. 4.3.12]{DTU18}.

If $q=\infty$, we put $\beta = 1/2$, $\delta =0$ and hence get
$$
\sup_{f \in \mathbf{E}^r_d} \Vert f - A_m f  \Vert_\infty \lesssim 
m^{-(r-1)} (\log m)^{(d-1)r }.
$$
\end{proof}

\begin{remark}
In the case of $L_q$-approximation with 
$2 {<}
q < \infty$, 
one can derive (better) 
bounds for $f\in \mathbf{E}^r_d$  from the embedding $\mathbf{E}^r_d \subset \mathbf{H}^{r-1/2}_2$. 
The corresponding estimate $m^{-(r-1+1/q)} (\log m)^{(d-1)(r-1+2/q)}$ for Smolyak-type algorithms was obtained in \cite[Thm. 6.6]{Byrenheid_Ullrich2017} 
(putting there $r_1 = r-1/2$, $p=2$, $\theta = \infty$, $\mu = d$, $m\asymp 2^n n^{d-1}$). 
For $q=2$, the bounds coincide and are asymptotically sharp. 
 {Note that}
\cite[Thm. 5.1 (Remark\ 6.4)]{Kol_Lomako_Tikhonov23} (putting $\alpha =r$, $p=\infty$,  
$\sigma_{\infty, q'}=1-1/q$ and $m\asymp 2^n n^{d-1}$) also yields the upper bound $m^{-(r-1+1/q)} (\log m)^{(d-1)(r+ 1/q)}$ for 
Smolyak-type recovery in $L_q$, $2<q< \infty$, which is worse in the logarithmic rate even than those obtained in our Corollary~\ref{estim_Korobov_linear_L_2}.
\end{remark}


\subsection{Sobolev spaces on manifolds}
\label{subsec:mani} \,

Let $D$ be a compact homogeneous Riemannian manifold
equipped with the normalized volume measure $\mu$.
Typical examples are the torus and the sphere.
We let $0= \lambda_0 < \lambda_1 < \dots$ be the eigenvalues of the Laplace-Beltrami operator and $E_0, E_1, \dots$ be the corresponding eigenspaces. The eigenspaces are known to be finite-dimensional and orthogonal in $L_2(\mu)$. We take orthonormal bases $(b_{\ell,k})_{k\le \dim(E_\ell)}$ of the spaces $E_\ell$. By the addition theorem, see \cite[Thm.~3.2]{Gin},
we have
\[
 \sum_{k=1}^{\dim(E_\ell)} |b_{\ell,k}(x)|^2 \,=\, \dim(E_\ell),
 \quad x\in D.
\]
We define the generalized Sobolev spaces $H^w=H^w(D)$ via the reproducing kernel
\[
 K(x,y) \,=\, \sum_{\ell=0}^\infty w_\ell^2 \sum_{k=1}^{\dim(E_\ell)} b_{\ell,k}(x) \overline{b_{\ell,k}(y)}.
\]
where we assume that the sequence $(w_\ell)$ is non-increasing and satisfies
\begin{equation}\label{eq:beta}
 \sum_{\ell=0}^\infty w_\ell^2 \dim(E_\ell) \,<\, \infty.
\end{equation}
This condition 
is needed for the kernel to be well-defined
and ensures the boundedness of the kernel at the same time.
The classical Sobolev spaces $H^s=H^s(D)$ of smoothness $s$ are obtained
if we put $w_{\ell}^2 = \lambda_\ell^{-s}$
and then the relation \eqref{eq:beta} is satisfied if and only if $s>\dim(D)/2$, see again \cite{Gin} and the references therein.

To apply our results, we observe that
\[
 \Lambda_m^2 = m \quad\text{and}\quad K_m(x,x)\,=\, \sum_{\ell>L}^\infty w_\ell^2\, \dim(E_\ell)
\]
for all $m$ of the form 
\begin{equation}\label{eq:special-m}
 m = \sum_{\ell\le L} \dim(E_\ell), \quad  L\in\mathbb{N}_0.
\end{equation}
We obtain universal constants $C,c>0$ such that,
for all $m$ as above 
and $2\le p \le \infty$,
\begin{equation}\label{eq:UB-mani-p}
 g_{cm}^\lin(H^w,L_p) \,\le\,  C m^{-1/p} \max\bigg\{ \sqrt{m}\,w_{L+1}, \, \sqrt{\sum_{\ell>L}^\infty w_\ell^2\, \dim(E_\ell)} \bigg\}.
\end{equation}
{where, of course, $1/p:=0$ for $p=\infty$.
Indeed, the case $p=2$ follows from the $L_2$-error bound \eqref{eq:DKU} from \cite[Thm.~23]{DKU}, and then the case $p=\infty$ follows from Lemma~\ref{lem:liftH}.
Since both bounds are for the same algorithm, we can interpolate to obtain the bound for the remaining cases $p\in(2,\infty)$.}

On the other hand, the approximation numbers (as defined in \eqref{appnu}) satisfy $a_m(H,L_2)=w_{L+1}$ and 
\[
a_m(H^w,L_\infty) 
 \,=\, \sqrt{\sum_{\ell>L}^\infty w_\ell^2\, \dim(E_\ell)}
 {\,=\, \sqrt{\sum_{k>m} a_k(H,L_2)^2}}.
\]
{The lower bound in the previous equality follows from \eqref{cobos}
and the upper bound by taking the $L_2$-orthogonal projection onto $\bigcup_{\ell \le L} E_\ell$ as an approximation operator.
Therefore,
equation \eqref{eq:UB-mani-p} 
for $p=\infty$} 
alternatively reads
\begin{equation}\label{eq:UB-mani-inf}
 g_{cm}^\lin(H^w,L_\infty) \,\le\, C \max\big\{ \sqrt{m} \cdot a_m(H^w,L_2) ,\,  a_m(H^w,L_\infty) \big\}.
\end{equation}
{If we further define $m'$ to be the largest number of the form~\eqref{eq:special-m}
that satisfies $m'\le m/2$,
we may use
\[
 m \cdot a_m(H^w,L_2)^2 \,\le\, 2 \sum_{k=m'+1}^m a_k(H^w,L_2)^2
 \,\le\, 2 a_{m'}(H^w,L_\infty)^2
\]
to obtain from \eqref{eq:UB-mani-inf} that
\[
 g_{cm}^\lin(H^w,L_\infty) \,\le\, C' a_{m'}(H^w,L_\infty)
 \,=\, C' c_{m'}(H^w,L_\infty)
\]
with $C'=\sqrt{2} C$.}
In this sense,
standard information (function values) is as powerful as linear information for the problem of uniform approximation for all generalized Sobolev spaces on compact homogeneous Riemannian manifolds.

We also note that 
the dimension of $E_\ell$ increases at most polynomially (see Weyl's formula, {\cite[eq.\,(1)]{Gin}})
such that the relation \eqref{eq:UB-mani-p} in fact holds for all $m\in\mathbb{N}$ 
and a modified constant $c$, 
now depending on the dimension of the manifold $D$.
It can also be deduced from Weyl's formula that
\[
a_m(H^s,L_\infty) \,\asymp_{D,s}\, m^{-s/\dim(D)+1/2},
\]
and
\[
a_m(H^s,L_2) \,\asymp_{D,s}\, m^{-s/\dim(D)},
\]
such that we obtain the following.

\medskip

\begin{corollary}
    There are universal constants $c,C>0$ such that for any $d\in\mathbb{N}$, any compact homogeneous Riemannian manifold $D$ of dimension $d$, all Sobolev spaces $H^w=H^w(D)$ as above, and all $2\le p \le \infty$,
    we have 
    \[
     g_{cm}^\lin(H^w,L_p) \,\le\, C m^{-1/p} \max\big\{ \sqrt{m} \cdot a_m(H^w,L_2) ,\,  a_m(H^w,L_\infty) \big\},
    \]
    whenever $m$ equals the number of eigenvalues of the Laplace-Beltrami operator smaller than some $T>0$ (counted with multiplicity).
    In particular, for all $1\le p \le \infty$ and $s>d/2$,
    \[
    g_m^\lin(H^s,L_p) 
    \;\lesssim_{d,s}\; 
    m^{-s/d+(1/2-1/p)_+}.
    \]
\end{corollary}

\begin{remark}
The asymptotic relation
\[
 g_m^\lin(H^s,L_p) \;\asymp_{d,s}\; m^{-s/d+(1/2-1/p)_+}
\]
is known.
This is a classical result for Sobolev spaces on the cube (see, e.g., \cite[Chapter~6]{Heinrich})
and can be transferred to smooth manifolds, e.g., as in \cite{KSmani}.
We refer to \cite[Section~3.3]{HM} and \cite{FHJU22,Gr19,LS,Mhaskar,Mhaskar2} for more direct approaches
to function recovery on manifolds.
Our contributions are the alternative proof,
the dimension-independent constants
and a simple almost-optimal algorithm, see below.
\end{remark}

\smallskip

If we insert the algorithm from \cite[Thm.~1]{KU1} instead of \cite[Thm.~23]{DKU} into our Lemma~\ref{lem:liftH},
we obtain the following slightly worse
but more practical error bound.
Namely, for all $m=\dim(W_L)$, where $W_L$ denotes the span of the eigenspaces $E_0,\dots,E_L$, the least-squares algorithm
\begin{equation}\label{eq:algran}
 A_m(f) \,=\, \underset{g \in W_L}{\rm argmin}\, \sum_{i=1}^n |f(x_i)-g(x_i)|^2,
\end{equation}
using $n=c_t m\log(m)$ i.i.d.\ samples $x_1,\dots,x_n\in D$ distributed according to the volume measure $\mu$,
satisfies {with probability $1-n^{-t}$} 
that
\[
 \Vert f - A_m f \Vert_p \,\le\, 
 C m^{-1/p} \max\big\{ \sqrt{m} \cdot a_m(H^w,L_2) ,\,  a_m(H^w,L_\infty) \big\}
\]
for all $f\in H^w$ and $2\le p \le \infty$.
Here, {$t>0$ is arbitrary,
$C>0$ is a universal constant, and $c_t>0$ only depends on $t$.
See \cite[Thm.\,2]{U2020} for the probability estimate.}

In particular, we see that i.i.d.\ random samples are optimal up to a logarithmic oversampling factor for uniform approximation on such manifolds. This observation was already made in \cite{KSmani}. We improve upon the corresponding observation in \cite{KSmani} in the sense of the dimension-independent constants, the more general sequence $w$, and a possibly simpler algorithm~$A_m$.

Recall that the logarithmic oversampling factor can be removed by using only a subset of the random points in \eqref{eq:algran}, but the corresponding subsampling approach from \cite{DKU} is non-constructive.
We note that there is also a constructive way to (not remove but) reduce the logarithmic oversampling factor using the subsampling approach from \cite{BSU}.

Finally, let us mention that our results can also be applied for non-homogeneous Riemannian manifolds as the asymptotic bound $\Lambda_m \lesssim \sqrt{m}$ remains true in this setting, see \cite[Lemma~5.2]{Mhaskar2}.


\subsection{A non-uniformly bounded basis system} \label{ex:unbounded}

{Let us} define a second order operator $A$ on $[-1,1]$  by
  {$Af = - (gf')'$ with $g(x):=1-x^2$}. 
Together with the 
{normalized Lebesgue measure}
$\rm d x$ on $[-1,1]$, it
characterizes for $s>1$ weighted Sobolev spaces
\begin{equation*}
H(K_s) := \{ f\in L_2\colon \  A^{s/2}f \in L_2 \}\,.
\end{equation*}
These spaces are RKHS with the kernel
$$
K_s(x,y) = \sum_{k\in \mathbb{N}} \frac{{p}_k(x)  {p}_k(y)}{ 1+ (k(k+1))^s  } \,,
$$
where ${p}_k\colon [-1,1] \rightarrow \mathbb{R}$, $k \in 
{\mathbb{N}_0}$, are
$L_2$-normalized Legendre polynomials.  The polynomials $p_k$ are eigenfunctions of $A$ with eigenvalues $\lambda_k=k(k+1)$.
 {Let also $\Pi_m := \operatorname{span}\{p_k\colon \ 0\leq k\leq m-1 \}$.}

We refer to 
{\cite[Section 5]{Chernov_Dung2016} for the details}
{and to \cite{Bernardi_Maday1992}} for
the first $L_2$-approximation result 
using
Gauss points.  {For the $L_{\infty}$-approximation, we get the following estimate.}

\begin{theorem}
For all  
$s>1$, it holds that 
\begin{equation}\label{univariate_Legendre_nonweighted_samplingn}
g_
{{m}}^{\lin}( H(K_s), 
L_{\infty})
 \,\lesssim_s\, 
m^{-s+1}.
\end{equation}
\end{theorem}
 {
\begin{proof}
 The Christoffel function satisfies
$$
\Lambda_m^2 = 
{\Lambda(\Pi_m, L_{\infty})^2}
=
\sup_{x\in [-1,1]} \sum_{
{k=0}}^{
{m-1}
} |p_k (x) |^2 =
\sum_{k=0}^{m-1} (2k+1) = m^2\,.
$$
Taking into account that in this setting 
$\alpha(\Pi_m,H(K_s), L_2) = (1+ (m(m+1))^s )^{-1/2}$,
we get the result via Corollary~\ref{coro:general}.
\end{proof}
}

The estimate (\ref{univariate_Legendre_nonweighted_samplingn}) is better by a logarithmic factor than those obtained in Section 6.3 of \cite{PU}.

{
We now consider the sampling numbers of functions from $H(K_s)$ in the  {energy} space 
 {$\mathcal{H}^1$}, 
where 
 {$\mathcal{H}^1$}
denotes the space of all functions with weak first derivative being in $L_2$ and corresponding (semi-)norm.

\begin{theorem}
For all 
$s>2$, it holds that 

\[
g_m^{\lin}( H(K_s), 
 {\mathcal{H}^1})
\,\lesssim_s\, 
m^{-s+2}.
\]
\end{theorem}
\begin{proof}
It was shown by E. Schmidt in 1944 \cite[Formula (13)]{Schmidt1944}
 {that} for the Christoffel function of the set $V_m = \Pi_m$ of univariate polynomials of degree
 {at most $m-1$}
it
 {holds that}
$$
\Lambda(\Pi_m,
 {\mathcal{H}^1}
) \asymp m^2.
$$
Hence,  {Corollary~\ref{coro:general}  with  $\alpha(\Pi_m,H(K_s), L_2) = (1+ (m(m+1))^s )^{-1/2}$ yields the respective upper bound.}
\end{proof}

 {Note that}
there are many papers devoted to estimation of the recovery error in the 
 {$\mathcal{H}^1$}-norm {(see, e.g., \cite{Byrenheid_Dung_Sickel_Ullrich,Griebel_Knapek2000,Griebel_Knapek2009} and the references therein)}. This problem is connected to the numerical recovery of the solutions to a PDE (see, e.g., \cite{Binev_Bonito_2023}).
}

\vskip5mm

{\bf Acknowledgment.} The authors would like to thank Mathias Sonn\-leit\-ner for valuable comments on Section~\ref{subsec:mani}. 
The authors are grateful for the hospitality of the Leibniz Center Schloss Dagstuhl where this manuscript has been intensively discussed during the Dagstuhl Seminar 23351 {\it Algorithms and Complexity for Continuous Problems} (August/September 2023).

\bibliographystyle{amsplain}

\begin{thebibliography}{10}

\bibitem{Bab} 
K. I. Babenko, 
{\it About the approximation of periodic functions of many variable trigonometric polynomials},
Doklady Akademii Nauk SSSR, 32, 247--250, 1960.



\bibitem{BSU} F. Bartel, M. Sch\"afer, and T. Ullrich, {\it Constructive subsampling of finite frames with applications in optimal function recovery}, Applied. Comput. Harmon. Anal., {65, 209--248, 2023.}

\bibitem{Bernardi_Maday1992}
C. Bernardi and Y. Maday, 
{\it  Polynomial interpolation results in {S}obolev spaces},
{J.~Comput. Appl. Math.},
 {43(1)}, 53--80,
 {1992},
    {https://doi.org/10.1016/0377-0427(92)90259-Z}.



\bibitem{Binev_Bonito_2023}
P. Binev, A. Bonito, A. Cohen, W. Dahmen, R. DeVore, and G. Petrova,
{\it Solving PDEs with incomplete information},
{SIAM J. Num. Anal., 62(3), 2024, https://doi.org/10.1137/23M1546671.
}



\bibitem{Byrenheid_Dung_Sickel_Ullrich}
G.\,Byrenheid, D.\,D\~ung, W.\,Sickel, and T.\,Ullrich,
{\it Sampling on energy-norm based sparse grids for the optimal recovery of {S}obolev type functions in $H^\omega$},
J. Approx. Theory,
 207, 207--231,
2016,
https://doi.org/10.1016/j.jat.2016.02.012.

\bibitem{Byrenheid_Ullrich2017} G. Byrenheid and T. Ullrich,
{\it Optimal sampling recovery of mixed order Sobolev embeddings
via discrete Littlewood-Paley type characterizations}, Analisis Matematica,
{43, 133--191},
2017,
{https://doi.org/10.1007/s10476-017-0303-5.}

\bibitem{Chernov_Dung2016}
{A. Chernov and D. D\~ung,
{\it New explicit-in-dimension estimates for the cardinality of high-dimensional hyperbolic crosses and approximation of functions having mixed smoothness}, J. Complexity, 32(1), 92--121, 
2016.
}

\bibitem{CKS} F. Cobos, T. Kühn, and  W. Sickel, 
{\it Optimal approximation of multivariate periodic
Sobolev functions in the sup-norm},
J. Funct. Anal., 270(11), 4196--4212, 2016.


\bibitem{Diestel-book}
J. Diestel, 
{\it Geometry of Banach Spaces -- Selected Topics}, 
Lecture Notes in Mathematics, Vol. 485. Springer-Verlag, Berlin-New York, 
1975, xi+282 pp.

\bibitem{DKU} M.\,Dolbeault, D.\,Krieg, and M.\,Ullrich,  
{\it A sharp upper bound for sampling numbers in $L_2$}, 
Appl. Comput. Harmon. Anal.,  63, 2023, 
			113--134,
			https://doi.org/10.1016/j.acha.2022.12.001. 


\bibitem{DD91} D.\,D{\~u}ng, {\it On optimal recovery of multivariate periodic functions}, In: Har-
monic Analysis (Conference Proceedings, Ed. S. Igary), Springer-Verlag
1991, Tokyo-Berlin.

{
\bibitem{Dung_Ullrich2015}
{D.}~D\~ung and T.~Ullrich, 
{\it
Lower bounds for the integration error for multivariate functions with mixed smoothness and optimal Fibonacci cubature for functions on the square},
Math. Nachr., 288, 743-762, 2015, https://doi.org/10.1002/mana.201400048.
}

\bibitem{DTU18}
{D.}~D\~ung, V.\,N.~Temlyakov, and T.~Ullrich,
\newblock {H}yperbolic {C}ross {A}pproximation.
\newblock {\em Advanced Courses in Mathematics - CRM Barcelona}.
  Springer International Publishing, 2018.


\bibitem{FHJU22} F. Filbir, R. Hielscher, T. Jahn, and T. Ullrich, {\it Marcinkiewicz--Zygmund inequalities for scattered and random data on the $q$-sphere}, 
{Appl. Comp. Harm. Anal., 71, 101651, 2024, https://doi.org/10.1016/j.acha.2024.101651.}


\bibitem{Freud} G. Freud,
{\it Orthogonale Polynome}. Lehrbücher und Monographien aus dem Gebiete
der Exakten Wissenschaften, Mathematische Reihe, Band 33. Birkh\"auser Verlag, Basel-Stuttgart, 1969.


\bibitem{GW23}
J.~Geng and H.~Wang, 
{\it On the power of standard information for tractability for $L_\infty$ approximation of periodic functions in the worst case setting}, 
{J. Complexity, 80: 101790, 2024, https://doi.org/10.1016/j.jco.2023.101790.}

\bibitem{Gin}
E.~Gin\'e, {\it The addition formula for the eigenfunctions of the Laplacian},
Advances in Mathematics, 18(1), 102--107, 1975.


\bibitem{Griebel_Knapek2000}
M.\,Griebel and S.\,Knapek, 
{\it Optimized tensor-product approximation spaces}, 
Constr. Approx., 16(4), 525--540, 2000, https://doi.org/10.1007/s003650010010.

\bibitem{Griebel_Knapek2009} 
M.\,Griebel and S.\,Knapek, 
{\it Optimized general sparse grid approximation spaces for operator equations},
Math. Comp.,
78(268), 2223--2257, 2009, https://doi.org/10.1090/S0025-5718-09-02248-0.

\bibitem{Gr19} K. Gröchenig, {\it Sampling, Marcinkiewicz-Zygmund inequalities, approximation, and quadrature rules}, J. Approx. Theory, 257, 105455, 20, 2020.

\bibitem{Heinrich} S. Heinrich, {\it Random approximation in numerical analysis}, in K. D. Bierstedt and et al.,
editors, Funct. Anal.,  123--171. Dekker, New York, 1994.
	
\bibitem{HNV} A.\,Hinrichs, E.\,Novak, and J.\,Vybiral, {\it  Linear information versus function evaluations for $L_2$-approximation}, J. Approx. Theory, 153(1), 
97-107, 2008.



\bibitem{HKNV} A.\,Hinrichs, D.\,Krieg, E.\,Novak, and J.\,Vybiral, 
{\it Lower bounds for integration and recovery in $L_2$}, J. Complexity, 101662, 2022, https://doi.org/10.1016/j.jco.2022.101662.

\bibitem{HM} S. Hubbert and T.\,M.~Morton,
{\it $L_p$-error estimates for radial basis function interpolation on the sphere},
J. Approx. Theory, 129(1):58--77, 2004.




\bibitem{KKLT} B. Kashin, E. Kosov, I. Limonova, and V. Temlyakov,
{\it Sampling discretization and related problems},
J. Complexity, 101653, 2022,
https://doi.org/10.1016/j.jco.2022.101653.

\bibitem{Kol_Lomako_Tikhonov23}
Y.\,Kolomoitsev, T.\,Lomako, and S.\,Tikhonov, 
{\it Sparse grid approximation in weighted {W}iener spaces},
J. Fourier Anal. Appl.,  29 (19), 2023, https://doi.org/10.1007/s00041-023-09994-2.





\bibitem{Kam19} L. K\"ammerer,
{\it Multiple lattice rules for multivariate {$L_{\infty}$} approximation in the worst-case setting},
   {arXiv:1909.02290v1}.
 
 \bibitem{Kam_Vol19} L. K\"ammerer and T.  Volkmer, 
{\it Approximation of multivariate periodic functions based on sampling along multiple rank-1 lattices},
 {J. Approx. Theory},
  {246}, {1--27},
  {2019},
https://doi.org/10.1016/j.jat.2019.05.001.


\bibitem{KPUU23_2}
D. Krieg, K. Pozharska, M. Ullrich, and T. Ullrich,
 {Sampling projections in the uniform norm}, 
J. Math. Anal. Appl. 553(2), 129873, 2026,
https://doi.org/10.1016/j.jmaa.2025.129873.
 

 
\bibitem{KSUW} D. Krieg, P. Siedlecki, M. Ullrich, and H. Wo\'zniakowski,
{\it Exponential tractability of $L_2$-approximation with function values},
Adv. Comput. Math. 49, 18 (2023). https://doi.org/10.1007/s10444-023-10021-7

\bibitem{KSmani} D. Krieg, M. Sonnleitner,
{\it Function recovery on manifolds using scattered data}, 
{IMA J. Num. Anal.,  44(3), 1346--1371, https://doi.org/10.1093/imanum/drad014.
}

\bibitem{KU1} D. Krieg and M. Ullrich,
{\it Function values are enough for $L_2$-approximation}, 
Found. Comput. Math., 21(4), 1141-1151, 2021.

\bibitem{KU2} D.\,Krieg and M.\,Ullrich,
{\it Function values are enough for $L_2$-approximation: Part II}, 
J. Complexity, 66, 101569, 2021, https://doi.org/10.1016/j.jco.2021.101569.

\bibitem{KV} D.\,Krieg and J.\,Vyb\'iral, {\it New lower bounds for the integration of periodic functions},
{J. Fourier Anal. Appl., 29(41), 2023,
https://doi.org/10.1007/s00041-023-10021-7.}

\bibitem{Kunsch2018}
   R.\,Kunsch,
 {\it Breaking the curse for uniform approximation in {H}ilbert spaces via {M}onte {C}arlo methods},
  {J. Complexity},
  {48},  {15--35},
  {2018},
https://doi.org/10.1016/j.jco.2018.04.002.


\bibitem{KSU} T. Kühn, W. Sickel, and T. Ullrich, {\it Approximation of mixed order Sobolev functions on the $d$-torus -- asymptotics, preasymptotics and $d$-dependence}.
Constr. Approx., 42, 353--398, 2015.


\bibitem{KWW08}
F.\,Kuo, G.\,W.\,Wasilkowski, and H.\,Wo\'zniakowski,
{\it Multivariate {$L_\infty$} approximation in the worst case setting over reproducing kernel {H}ilbert spaces},
J. Approx. Theory, 152(2), 135--160, 2008,
https://doi.org/10.1016/j.jat.2007.11.006.


\bibitem{KWW}
F.~Kuo, G.~W.~Wasilkowski, and H.~Wo\'zniakowski,
{\it On the power of standard information for multivariate approximation
  in the worst case setting},
J.  Approx. Theory, 158, 97--125, 2009.



\bibitem{LS} 
J. Leveslay and X. Sun, {\it Approximation in rough native space by shifts of smooth kernels on spheres}, J. Approx. Theory
133, 269--283, 2005.



\bibitem{Mhaskar}
H.\,N. Mhaskar, {\it Weighted quadrature formulas and approximation by zonal function networks on the sphere}, J. Complexity, 22(3), 348–370, 2006.

\bibitem{Mhaskar2}
H.\,N. Mhaskar, {\it Eignets for function approximation on manifolds},
Appl. Comput. Harm. Anal.,
29(1), 63--87, 2010.


\bibitem{Mit} B.\,S.\,Mityagin, {\it Approximation of functions in $L^p$ and $C$ on the torus}.
Math. Notes, 58, 397--414, 1962.

\bibitem{MU} M. Moeller and T. Ullrich, {\it $L_2$-norm sampling discretization and recovery of functions from RKHS with finite trace}, Sampling Theory, Signal Processing, and Data Analysis, 19(2), 1--31, 2021.

\bibitem{MPU25}
M. Moeller, K. Pozharska, and T. Ullrich, {\it Sampling designs for function recovery: theoretical guarantees, comparison and optimality}, MCQMC24 Proceedings, to appear.

\bibitem{NSU} N. Nagel, M. Sch\"afer, and T. Ullrich,
{\it A new upper bound for sampling numbers}, Found. Comput. Math., 22(2), 445-468, 2022.

\bibitem{Nevai} P. Nevai. 
\emph{G\'eza Freud, orthogonal polynomials and Christoffel functions. A case study. } J.
Approx. Theory, 48(1), 3--167, 1986.


\bibitem{NNS} V.\,D.\,Nguyen, V.\,K.\,Nguyen, and W.\,Sickel, {\it $s$-numbers of embeddings of weighted
Wiener algebras}. J. Approx. Theory, 279, 105745, 2022.


\bibitem{Novak} E. Novak, {\it Deterministic and stochastic error bounds in numerical analysis},
Lecture Notes in Mathematics 1349, Springer-Verlag, 1988.



\bibitem{NW1}
E.~Novak and H.~Wo\'zniakowski,
\newblock {\em Tractability of multivariate problems. {V}olume {I}: {L}inear
  information}, volume~6 of {\em EMS Tracts in Mathematics}.
\newblock European Mathematical Society (EMS), Z\"urich, 2008.


\bibitem{NW3} E.\,Novak and H.\,Wo\'zniakowski, {\it Tractability of multivariate problems. Volume III: Standard information for operators},
volume~18 of {\em EMS Tracts in Mathematics}.
\newblock European Mathematical Society (EMS), Z\"urich, 2012.


\bibitem{Osipenko_Parfenov1995}
K.\,Y. Osipenko and  O.\,G.\,Parfenov, 
{\it Ismagilov type theorems for linear, {G}el'fand and {B}ernstein
          {$n$}-widths},
 {J. Complexity},
 {11(4)}, {474--492},
 {1995},
http://dx.doi.org/10.1006/jcom.1995.1025.


\bibitem{Pietsch-s}
A. Pietsch, {\it s-Numbers of operators in Banach spaces}, 
Studia Mathematica, 51(3), 201--223, 1974.


\bibitem{Pinkus85}
A. Pinkus, {\it $n$-widths in approximation theory}, 
Ergebnisse der Mathematik und ihrer Grenzgebiete. 3. Folge / A Series of Modern Surveys in Mathematics (MATHE3, volume 7), Springer Berlin, Heidelberg, 1985, 
https://doi.org/10.1007/978-3-642-69894-1.


\bibitem{PU}
K. Pozharska and T. Ullrich.
{\it A note on sampling recovery of multivariate functions in the uniform norm}, 
SIAM J. Numer. Anal., 60(3),  1363--1384, 2022, https://doi.org/10.1137/21M1410580. 




\bibitem{Schmidt1944} 
 E. Schmidt, 
{\it \"Uber die nebst ihren Ableitungen orthogonalen Polynomensysteme und das zugeh\"orige Extremum},
Math. Ann., 119, 165--204, 1944, https://doi.org/10.1007/BF01563739.


\bibitem{SU23}
M.~Sonnleitner, M.~Ullrich, 
{\it On the power of iid information for linear approximation}, 
J. Appl. Numer. Anal., 
 1,  88--126, 2023, 
https://doi.org/10.30970/ana.2023.1.88.

\bibitem{Sm}        
S. A. Smolyak,       
{\it Quadrature and interpolation formulas for tensor products       
of certain classes of functions},     
Soviet Mathematics Doklady, 4, 240--243, 1963. 


\bibitem{Steinwart_Scovel2012}
I. Steinwart and C. Scovel,
{\it Mercer’s theorem on general domains:
on the interaction between measures, kernels,
and {RKHS}s},
Constr. Approx., 35, 363--417, 2012, 
https://doi.org/10.1007/s00365-012-9153-3.

\bibitem{Tem93}
V.\,N.\,Temlyakov, 
{\it On approximate recovery of functions with bounded mixed derivative}, J.
 Complexity, 9,  41--59, 1993, https://doi.org/10.1006/jcom.1993.1004. 


\bibitem{Tem18}
V.~N. Temlyakov.
\newblock {\em Multivariate Approximation}, volume~32 of {\em Cambridge
  Monographs on Applied and Computational Mathematics}.
\newblock Cambridge University Press, 2018.

\bibitem{T20} V.\,N.\,Temlyakov, {\it On optimal recovery in $L_2$},  
J. Complexity, 65, 101545, 2021, https://doi.org/10.1016/j.jco.2020.101545.



\bibitem{TU1}
V.~N.~Temlyakov and T.~Ullrich. 
{\it Approximation of functions with small mixed smoothness in the uniform norm},
J. Approx. Theory, 277, 105718, 2022.


\bibitem{TWW88}
J.~Traub, G.~Wasilkowski, and H.~Wo\'zniakowski, \emph{Information-Based Complexity}, 
Acad.~Press, 1988.

\bibitem{U2020} M. Ullrich, {\it On the worst-case error of least squares algorithms for $L_2$-approximation with high probability}, J.~Complexity, 60, 2020.


\bibitem{Zeng_Kritzer_Hickernell2009}
X. Zeng, P. Kritzer, and F. J. Hickernell,
{\it Spline methods using integration lattices and digital nets},
Constr. Approx., 30, 529--555, 2009,
https://doi.org/10.1007/s00365-009-9072-0.

\end{thebibliography}

\vskip2mm

\noindent\rule{\textwidth}{1pt}
\small

{\it 
Faculty of Computer Science and Mathematics, University of  Passau, Dr.-Hans-Kapfinger-Str. 30, 
94032 Passau, Germany. \\
Email address:} {\bf david.krieg@uni-passau.de}
\vskip1mm
{\it  Institute of Mathematics of NAS of Ukraine, Tereschenkivska Str.~3, 01024 Kyiv,
Ukraine; and Faculty of Mathematics,  Chemnitz University of Technology, Reichenhainer Str.~39, 
09126 Chemnitz, Germany. \\
Email address:}  {\bf pozharska.k@imath.kiev.ua}, \newline {\bf kateryna.pozharska@mathematik.tu-chemnitz.de}
\vskip1mm
{\it  Institute of Analysis and Department of Quantum Information and Computation,
Johannes Kepler University Linz, Altenberger Str. 69, 4040 Linz, Austria. \\
Email address:}  {\bf mario.ullrich@jku.at}
\vskip1mm
{\it  Faculty of Mathematics, Chemnitz University of Technology, Reichenhainer Str.
39, 09126 Chemnitz, Germany. \\
Email address:} {\bf tino.ullrich@mathematik.tu-chemnitz.de}

\end{document}